\def\subsection{\@startsection{subsection}{2}%
  \z@{.5\linespacing\@plus.7\linespacing}{.3\linespacing}%
  {\normalfont\bfseries}}
\newtheorem{lemma}{Lemma}[section]
\newtheorem{prop}[lemma]{Proposition}
\newtheorem{cor}[lemma]{Corollary}
\newtheorem{thm}[lemma]{Theorem}
\newtheorem{thm?}[lemma]{Theorem?}
\newtheorem{conj}[lemma]{Conjecture}
\newtheorem*{unthm}{Theorem}
\newtheorem{fact}{Fact}
\theoremstyle{definition}
\newtheorem{example}[lemma]{Example}
\theoremstyle{remark}
\newtheorem{remark}[lemma]{Remark}
\begin{document}
\title{Typically Bounding Torsion}
%\author{Abbey Bourdon}
\author{Pete L. Clark}
\author{Marko Milosevic}
\author{Paul Pollack}
%\author{James Stankewicz}

\date{\today}

%\address{Department of Mathematics \\ Boyd Graduate Studies Research Center \\ %University
%of Georgia \\ Athens, GA 30602-7403 \\ USA}
%\email{pete@math.uga.edu}
%\email{bcook@math.ubc.ca}
%\email{stankewicz@gmail.com}

%

\newcommand{\etalchar}[1]{$^{#1}$}
\newcommand{\F}{\mathbb{F}}
\newcommand{\et}{\textrm{\'et}}
\newcommand{\ra}{\ensuremath{\rightarrow}}
\newcommand{\FF}{\F}
\newcommand{\Z}{\mathbb{Z}}
\newcommand{\N}{\mathcal{N}}
\newcommand{\ch}{}
\newcommand{\R}{\mathbb{R}}
\newcommand{\PP}{\mathbb{P}}
\newcommand{\pp}{\mathfrak{p}}
\newcommand{\C}{\mathbb{C}}
\newcommand{\Q}{\mathbb{Q}}
\newcommand{\tpqr}{\widetilde{\triangle(p,q,r)}}
\newcommand{\ab}{\operatorname{ab}}
\newcommand{\Aut}{\operatorname{Aut}}
\newcommand{\gk}{\mathfrak{g}_K}
\newcommand{\gq}{\mathfrak{g}_{\Q}}
\newcommand{\OQ}{\overline{\Q}}
\newcommand{\Out}{\operatorname{Out}}
\newcommand{\End}{\operatorname{End}}
\newcommand{\Gon}{\operatorname{Gon}}
\newcommand{\Gal}{\operatorname{Gal}}
\newcommand{\CT}{(\mathcal{C},\mathcal{T})}
\newcommand{\ttop}{\operatorname{top}}
\newcommand{\lcm}{\operatorname{lcm}}
\newcommand{\Div}{\operatorname{Div}}
\newcommand{\OO}{\mathcal{O}}
\newcommand{\rank}{\operatorname{rank}}
\newcommand{\tors}{\operatorname{tors}}
\newcommand{\IM}{\operatorname{IM}}
\newcommand{\CM}{\operatorname{CM}}
\newcommand{\Frac}{\operatorname{Frac}}
\newcommand{\Pic}{\operatorname{Pic}}
\newcommand{\coker}{\operatorname{coker}}
\newcommand{\Cl}{\operatorname{Cl}}
\newcommand{\loc}{\operatorname{loc}}
\newcommand{\GL}{\operatorname{GL}}
\newcommand{\PGL}{\operatorname{PGL}}
\newcommand{\SL}{\operatorname{SL}}
\newcommand{\PSL}{\operatorname{PSL}}
\newcommand{\Frob}{\operatorname{Frob}}
\newcommand{\Hom}{\operatorname{Hom}}
\newcommand{\Coker}{\operatorname{\coker}}
\newcommand{\Ker}{\ker}
\renewcommand{\gg}{\mathfrak{g}}
\newcommand{\sep}{\operatorname{sep}}
\newcommand{\new}{\operatorname{new}}
\newcommand{\Ok}{\mathcal{O}_K}
\newcommand{\ord}{\operatorname{ord}}
\newcommand{\mm}{\mathfrak{m}}
\newcommand{\Ohell}{\OO_{p^{\infty}}}
\newcommand{\ff}{\mathfrak{f}}
\renewcommand{\N}{\mathbb{N}}
\newcommand{\Gm}{\mathbb{G}_m}
\renewcommand{\P}{\mathbb{P}}
\newcommand{\ffield}{\ensuremath{\mathbb{F}_p}}
\newcommand{\galoisrep}{\ensuremath{\rho_{E,p}}}
\newcommand{\GLp}{\ensuremath{\text{GL}_2(\ffield)}}
\newcommand{\torsion}{\ensuremath{E(F)[\text{tors}]}}
\newcommand{\Rdegree}{\ensuremath{\displaystyle [F(R):F]}}
\newcommand{\splitcartan}{\ensuremath{\mathcal{C}_{\text{sp}}}}
\newcommand{\nonsplitcartan}{\ensuremath{\mathcal{C}_{\text{ns}}}}
\newcommand{\Nsplitcartan}{\ensuremath{\mathcal{C}_{\text{sp}}^+}}
\newcommand{\Nnonsplitcartan}{\ensuremath{\mathcal{C}_{\text{ns}}^+}}
\renewcommand{\thefootnote}{\arabic{footnote}}
\newcommand{\divides}{\bigm|}
\begin{abstract}
%Based on prior work of Bourdon, Clark and Pollack for complex multiplication (CM) elliptic curves,
We formulate the notion of \emph{typical boundedness} of torsion on a family of abelian varieties defined over number fields.  This means that the torsion subgroups of
elements in the family can be made uniformly bounded by removing from the family all abelian varieties defined over number fields
of degree lying in a set of arbitrarily small density.  We show that for each fixed $g$, torsion is typically bounded on the family of all $g$-dimensional CM abelian varieties.  We show that torsion is \emph{not} typically bounded on the family of all elliptic curves, and we establish results -- some unconditional and some conditional -- on typical boundedness of torsion of elliptic curves for which the degree of the $j$-invariant is fixed.
\end{abstract}

\maketitle

%\tableofcontents

\section{Introduction}

\subsection{Notation and terminology}
\noindent For a subset $\mathcal{S} \subset \Z^+$,
we define the \emph{upper density}
\[ \overline{\delta}(\mathcal{S}) \coloneqq \limsup_{x \ra \infty} \frac{\# (\mathcal{S} \cap [1,x])}{x} \]
and the \emph{lower density}
\[ \underline{\delta}(\mathcal{S}) \coloneqq \liminf_{x \ra \infty} \frac{\# (\mathcal{S} \cap [1,x])}{x}. \]
When $\overline{\delta}(\mathcal{S}) = \underline{\delta}(\mathcal{S})$, we denote it by $\delta(\mathcal{S})$
and call it the \emph{asymptotic density} of $\mathcal{S}$.
\\ \\
For a field $F$, let $F^{\sep}$ be a separable algebraic closure of $F$ and let $\gg_F \coloneqq \Aut(F^{\sep}/F)$ be the absolute Galois
group of $F$.  For an abelian variety $A_{/F}$, we write $\End A$ for the endomorphism ring of $A_{/F^{\sep}}$ and
$\End^0 A \coloneqq \End A \otimes_{\Z} \Q$, both viewed as $\gg_F$-modules.  An abelian variety $A$ has \textbf{complex multiplication (CM)} if $\End^0 A$ contains an \'etale $\Q$-algebra (a finite product of number fields) of degree $2 \dim A$.

\subsection{Typical boundedness: a motivating result}
%For $g,d \in \Z^+$, let $T(g,d)$ denote the supremum of $\# A(F)[\tors]$ as $F$ ranges over all number fields of degree $d$
%and $A$ ranges over all $g$-dimensional abelian varieties defined over $F$.  Merel has shown \cite{Merel96} that $T(1,d)$ is finite
%for all $d$.  It is a standard conjecture that $T(g,d)$ is finite for all $g$ and $d$ but this remains open for all $(g,d)$ with $g \geq 2$.
%\\ \indent
%The situation improves if we restrict to suitable subclasses of abelian varieties.
\noindent Let $T_{\CM}(g,d)$ denote the supremum of $\# A(F)[\tors]$ as $F$ ranges over degree $d$ number fields
and $A$ ranges over all CM abelian varieties defined over $F$.  Silverberg \cite{Silverberg88, Silverberg92a} showed that
$T_{\CM}(g,d) < \infty$ for all $g$ and $d$ and obtained explicit upper bounds.  Sharper results were attained for CM elliptic curves by  Silverberg and also
recently by several others.   Clark-Pollack showed \cite{TRUTH, TRUTHII} that
\begin{equation}\label{eq:truthIIeq} \limsup_{d \ra \infty} \frac{T_{\CM}(1,d)}{d \log \log d} = \frac{e^{\gamma} \pi}{\sqrt{3}}. \end{equation}
Thus the \emph{upper order} of $T_{\CM}(1,d)$ is now known.  Work of Bourdon, Clark and Pollack \cite{BCP17} viewed $T_{\CM}(d)$ as an ``arithmetic function'' and studied other aspects
of its distribution, in particular giving the following \emph{normal order} result.

\begin{thm}[{\cite[Thm. 1.1a)]{BCP17}}]
\label{MOTIVETHM}
For all $\epsilon > 0$, there is $B_{\epsilon} \in \Z^+$ such that
\[ \overline{\delta}( \{d \in \Z^+ \mid T_{\CM}(1,d) \geq B_{\epsilon}\}) \leq \epsilon. \]
\end{thm}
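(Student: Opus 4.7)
The approach is to translate the condition $T_{\CM}(1,d) \geq B$ into a divisibility constraint on $d$ via the arithmetic structure of CM torsion, and then apply a union bound to obtain a density estimate. By the theory developed in the works cited above (Silverberg, Clark--Pollack, and their refinements), if a CM elliptic curve $E/F$ over a degree-$d$ number field has a rational point of order $N$, and $\mathcal{O}$ is its CM order, then $d$ must be a positive integer multiple of an explicit ``minimal torsion degree'' $T(\mathcal{O}, N)$, which admits a lower bound of the shape $T(\mathcal{O}, N) \gg \phi(N)\, h(\mathcal{O})$. Since $E(F)[\tors]$ has rank at most $2$, the condition $\#E(F)[\tors] \geq B$ forces a point of order at least $\sqrt{B}$. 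Combining these observations yields the containment
\[
\{d \in \Z^+ : T_{\CM}(1,d) \geq B\} \;\subseteq\; \bigcup_{\substack{\mathcal{O},\, N \\ N \geq \sqrt{B}}} T(\mathcal{O}, N) \cdot \Z.
\]

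Bounding the number of elements of the right-hand side in $[1,x]$ by $\sum x / T(\mathcal{O}, N)$ and dividing by $x$ gives
\[
\overline{\delta}(\{d : T_{\CM}(1,d) \geq B\}) \;\leq\; \sum_{\substack{\mathcal{O},\, N \\ N \geq \sqrt{B}}} \frac{1}{T(\mathcal{O}, N)}.
\]
The theorem follows once $B$ is chosen large enough that the right-hand side is at most $\epsilon$, so everything reduces to showing that this tail sum tends to $0$ as $B \to \infty$.

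The main obstacle is establishing this convergence. The crude bound $T(\mathcal{O}, N) \gg \phi(N)$ is insufficient because $\sum_N 1/\phi(N)$ diverges, so the argument must exploit the joint growth of $T(\mathcal{O}, N)$ in both variables. For fixed $N$, the factor $h(\mathcal{O})$ in the denominator together with Brauer--Siegel lower bounds controls the sum over CM orders $\mathcal{O}$; the resulting bound must then decay sufficiently in $N$ to make the outer sum over $N \geq \sqrt{B}$ tend to $0$. A careful organization that groups contributions according to the factorization of $N$ and the splitting behavior of its prime divisors in the orders $\mathcal{O}$ should close the argument. An alternative route, avoiding explicit enumeration, would be to bound a partial sum such as $\sum_{d \leq x} T_{\CM}(1,d)^{1/2}$ from the explicit torsion formulas and conclude via Markov's inequality with $B_\epsilon$ polynomial in $1/\epsilon$.
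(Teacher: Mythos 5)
The paper does not actually prove Theorem~\ref{MOTIVETHM} directly; it is cited from \cite{BCP17}. However, the paper's Theorem~\ref{BIGCMTHM} is a direct generalization (to $g$-dimensional CM abelian varieties), and its proof in \S3--\S4 specializes to the $g=1$ case, so we can compare your approach against that. The two approaches are genuinely different, but yours has a fatal gap: the union bound you set up cannot converge.

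The central difficulty is that the sum
\[
\sum_{\substack{\mathcal{O},\, N \\ N \geq \sqrt{B}}} \frac{1}{T(\mathcal{O}, N)}
\]
is not merely slow to control --- it is infinite, for every $B$. You correctly observe that $\sum_N 1/\phi(N)$ diverges, and you propose to rescue the argument using the $h(\mathcal{O})$ factor and Brauer--Siegel. But Brauer--Siegel works against you here: it gives $h(\mathcal{O}) = |\Delta_{\mathcal{O}}|^{1/2 + o(1)}$, and since the number of imaginary quadratic discriminants $\Delta$ with $|\Delta| \le X$ is $\asymp X$, one has
\[
\sum_{\mathcal{O}} \frac{1}{h(\mathcal{O})} \asymp \sum_{n} n^{-1/2 + o(1)} = \infty.
\]
So the sum diverges separately in \emph{each} variable, and a bound of the shape $T(\mathcal{O},N) \gg \phi(N)\,h(\mathcal{O})$ yields a doubly divergent double sum. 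Exploiting the inert/split distinction in $N$ modulo $\mathcal{O}$ improves $T(\mathcal{O},N)$ to roughly $N^2 h(\mathcal{O})$ for inert primes, but this changes nothing: the split primes already contribute a divergent sum. No ``careful organization according to factorization and splitting behavior'' can repair a union bound whose underlying series diverges. The suggested alternative via $\sum_{d \le x} T_{\CM}(1,d)^{1/2}$ and Markov also does not close: you would need an $O(x)$ bound on that partial sum, which is not stated, not obvious, and at least as hard as the original problem.

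What the paper does instead (via Theorem~\ref{P1P2THM}) is structurally quite different and sidesteps the divergence entirely. Using Gaudron--R\'emond (Theorem~\ref{GRTHM} and Corollary~\ref{GRCOR}), one obtains a \emph{uniform} constant $c$, depending only on the dimension and \emph{not} on the CM order $\mathcal{O}$ or on $N$, such that a rational point of prime order $p$ forces $\frac{p-1}{c} \mid [F:\Q]$ (this is Condition (P2)). One then applies the Erd\H os--Wagstaff theorem \cite{Erdos-Wagstaff80}: for any $\epsilon$ there is $C_\epsilon$ so that the set of integers with a divisor of the form $\ell - 1 > C_\epsilon$ for prime $\ell$ has upper density at most $\epsilon$. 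This is a genuine input from the anatomy of integers; it is emphatically \emph{not} a union bound, precisely because $\sum_\ell 1/(\ell-1)$ diverges. Erd\H os--Wagstaff exploits the fact that, even though each congruence class $\ell-1 \mid d$ individually has density $1/(\ell-1)$, the events overlap so heavily that most integers have no such large divisor. Your proposal replaces this deep analytic ingredient with a naive union bound, which is why the convergence problem appears where the paper has none. Once (P2) handles the large primes via Erd\H os--Wagstaff, Condition (P1) (a divisibility statement for prime powers, deduced again from Gaudron--R\'emond) removes the remaining small-prime contributions by a straightforward density argument. The uniformity of $c$ across all CM orders is what makes the reduction to a single Erd\H os--Wagstaff application possible; your framework, which retains $\mathcal{O}$ as a free parameter in the union, cannot reproduce this.
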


\subsection{Typical boundedness: definition and first examples}
\noindent
By a \emph{family of abelian varieties}, we mean a class $\mathcal{F}$ each of whose elements is an abelian variety
defined over a number field. (We will only consider classes such that if $(A_1)_{/F}$ lies in $\mathcal{F}$ then any $F$-isomorphic
abelian variety $(A_2)_{/F}$ also lies in $\mathcal{F}$.) Thus for instance we are permitted to take $\mathcal{F}$ to be the family of all abelian varieties over all number
fields.
% (though this is not an interesting family for our purposes, as we shall soon see).
We say that \emph{torsion is typically bounded on} $\mathcal{F}$ if for all $\epsilon > 0$,
there is $B_{\epsilon} \in \Z^+$ such that the set
\[ \mathcal{S}(\mathcal{F},B_{\epsilon}) \coloneqq \{ d \in \Z^+ \mid \exists \  A_{/F} \in \mathcal{F} \text{ such that }
[F:\Q] = d \text{ and } \# A(F)[\tors] \geq B_{\epsilon} \} \]
has upper density at most $\epsilon$.

\begin{remark}
\label{ABBEYERMARK}
If $\mathcal{F}_1,\ldots,\mathcal{F}_n$ are families of abelian varieties, then torsion is typically bounded on $\mathcal{F}_i$
for all $1 \leq i \leq n$ iff torsion is typically bounded on $\bigcup_{i=1}^n \mathcal{F}_i$.
\end{remark}
\noindent
We can rephrase Theorem \ref{MOTIVETHM} as follows:

\begin{thm}
\label{REMOTIVETHM}
Torsion is typically bounded on the family $\mathcal{A}_{\CM}(1)$ of all CM elliptic curves (over all number fields).
\end{thm}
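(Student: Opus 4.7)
The plan is simply to observe that Theorem \ref{REMOTIVETHM} is a direct rephrasing of Theorem \ref{MOTIVETHM}, once one unpacks the definition of typical boundedness against the definition of $T_{\CM}(1,d)$.

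The first step is to note that for each fixed $d$, the supremum $T_{\CM}(1,d)$ is finite by Silverberg's theorem. Since $\# A(F)[\tors]$ is a positive integer, the set of values achieved as $(A,F)$ varies is a bounded set of positive integers, hence finite, and so $T_{\CM}(1,d)$ is realized by some CM elliptic curve $A/F$ with $[F:\Q] = d$. It follows that for any $B \in \Z^+$,
\[ T_{\CM}(1,d) \geq B \iff \exists\, A_{/F} \in \mathcal{A}_{\CM}(1) \text{ with } [F:\Q] = d \text{ and } \# A(F)[\tors] \geq B. \]
In other words, the set $\{d \in \Z^+ \mid T_{\CM}(1,d) \geq B\}$ coincides with $\mathcal{S}(\mathcal{A}_{\CM}(1), B)$ as defined in the previous subsection.

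Given $\epsilon > 0$, let $B_\epsilon$ be the integer furnished by Theorem \ref{MOTIVETHM}. By the equality of sets above, the upper density bound $\overline{\delta}(\{d : T_{\CM}(1,d) \geq B_\epsilon\}) \leq \epsilon$ becomes $\overline{\delta}(\mathcal{S}(\mathcal{A}_{\CM}(1), B_\epsilon)) \leq \epsilon$, which is exactly the condition that torsion be typically bounded on $\mathcal{A}_{\CM}(1)$. There is no substantive obstacle; the only subtlety, the attainment of the supremum in the definition of $T_{\CM}(1,d)$, is immediate from finiteness together with integrality.
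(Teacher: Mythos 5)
Your proposal is correct and takes the same approach as the paper, which simply states Theorem \ref{REMOTIVETHM} as a rephrasing of Theorem \ref{MOTIVETHM}; your unpacking of the two definitions makes that equivalence explicit. One small remark: the appeal to Silverberg's finiteness is not actually needed to pass between ``$T_{\CM}(1,d)\ge B$'' and ``some $\#A(F)[\tors]\ge B$,'' since for any nonempty set of positive integers the supremum is $\ge B$ if and only if some element is $\ge B$ (finite suprema of integer sets are attained, and the infinite case is trivial); but citing Silverberg is harmless.
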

\noindent
In this paper we study typical boundedness of torsion in several other natural families of abelian varieties.  The following
example shows that certain kinds of families must be avoided for rather trivial reasons.

\begin{example}
\label{EXAMPLE1}
Torsion is \emph{not} typically bounded on the family $\mathcal{A}_{\CM}$ of all CM abelian varieties (over all number fields).  Indeed, we have $T_{\CM}(1,1) = 6$ \cite{Olson74}, so there is a CM elliptic curve $E_{/\Q}$ with $\# E(\Q)[\tors] = 6$.
Then for all $g \in \Z^+$, the abelian variety $E^g_{/\Q}$ has CM and $\# E^g(\Q)[\tors] = 6^g$.  Thus
for every number field $F$ we have $\# E^g_{/F}(F)[\tors] \geq 6^g$, so for all $B \in \Z^+$ we have $\mathcal{S}(\mathcal{A}_{\CM},B) = \Z^+$.
\end{example}
\noindent
Example \ref{EXAMPLE1} is not really about complex multiplication.  Rather it shows: if a
family $\mathcal{F}$ is closed under taking powers of abelian varieties and contains an element with nontrivial
torsion subgroup, then torsion is not typically bounded on $\mathcal{F}$.
%We will henceforth only pursue typical boundedness on families of abelian varieties of bounded dimension.

\begin{remark}
Consider the family $\mathcal{F}$ of all geometrically simple abelian varieties $A$ (over all number fields).  It ought to be true
that $\sup_{A_{/\Q} \in \mathcal{F}} \# A(\Q)[\tors] = \infty$ -- and thus torsion is not typically bounded on $\mathcal{F}$.  But this seems to be an open problem.
\end{remark}

\subsection{Typical boundedness: main results}
\noindent
Our first result is a direct generalization of Theorem \ref{REMOTIVETHM}.

\begin{thm}
\label{BIGCMTHM}
For all $g \in \Z^+$, torsion is typically bounded on the family $\mathcal{A}_{CM}(g)$ of all $g$-dimensional abelian varieties
(over all number fields).
\end{thm}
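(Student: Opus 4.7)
The plan is to extend the strategy behind Theorem \ref{REMOTIVETHM} (the $g=1$ case) to higher dimensions by combining Poincar\'e reducibility, the main theorem of complex multiplication, and a density argument analogous to the one underlying Theorem \ref{MOTIVETHM}.

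First, I would use Poincar\'e's complete reducibility theorem to write $A$ as $F$-isogenous to a product $\prod_{i=1}^{k} B_i^{n_i}$, where the $B_i$ are pairwise non-$F$-isogenous $F$-simple CM abelian varieties of dimensions $g_i$ satisfying $\sum_i n_i g_i = g$. Only finitely many combinatorial types $(k, g_i, n_i)$ occur, so Remark \ref{ABBEYERMARK} reduces the problem to each type separately. The degree of the $F$-isogeny can be controlled (e.g. by the Masser--W\"ustholz isogeny theorem, or by choosing a convenient maximal-order model), contributing only a uniformly bounded factor to $\#A(F)[\tors]$.

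Second, for each $F$-simple CM factor $B$ of dimension $g' \leq g$, I would use CM theory to constrain $\#B(F)[\tors]$. After base-changing $F$ along a bounded-degree extension containing the reflex $K^*$ of the absolute CM type of $B$ (an operation changing $[F:\Q]$ by at most a factor of $2g$), the CM action becomes $F$-rational. The main theorem of complex multiplication then identifies $B(F)[\tors]$ with $B[\mathfrak{n}]$ for some $\mathcal{O}_K$-ideal $\mathfrak{n}$ and forces $F$ to contain the $\mathfrak{n}$-ray class field of $K^*$, whose degree over $\Q$ grows at least linearly in $N\mathfrak{n} = \#B[\mathfrak{n}]$.

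Third, the density argument: if $\#A(F)[\tors] \geq B_\epsilon$, then for some $i$ the quantity $N\mathfrak{n}_i$ is at least $B_\epsilon^{1/g}$ (up to bounded constants), forcing $[F:\Q]$ to be divisible by the degree of a ray class field of large norm-conductor. Mimicking the analytic argument from the proof of Theorem \ref{MOTIVETHM}, one bounds the upper density of the exceptional $d$ by sums of the form $\sum 1/[K^*(\mathfrak{n}):\Q]$ over pairs $(K,\mathfrak{n})$ with $N\mathfrak{n} \geq B_\epsilon^{1/g}$, and these can be made arbitrarily small by taking $B_\epsilon$ large.

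The main obstacle is the uniformity over the infinite family of CM fields $K$ of degree $\leq 2g$. For each fixed $K$ the argument is a direct generalization of the $g=1$ treatment in \cite{BCP17}; combining the contributions uniformly calls for a dichotomy, whereby CM fields of small discriminant form a finite set handled individually, while CM fields of large discriminant yield ray class fields whose degrees grow fast enough (via Brauer--Siegel-type bounds) that their aggregate density contribution is negligible.
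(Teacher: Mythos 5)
Your proposal takes a genuinely different route from the paper, and while its high-level strategy is reasonable, it has a concrete gap and leaves the hardest parts unproven.

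The paper proceeds by its general criterion (Theorem~\ref{P1P2THM}): one only needs to verify the divisibility conditions (P1) and (P2), after which the density argument is handled once and for all by the Erd\H{o}s--Wagstaff theorem on integers divisible by $\ell-1$ for some large prime $\ell$. Both (P1) and (P2) for $\mathcal{A}_{\CM}(g)$ are then extracted from a single input, the Gaudron--R\'emond divisibility $\varphi(e)\mid\frac{\mu}{2}[F:\Q]$ (Theorem~\ref{GRTHM}), after two bookkeeping steps (a Minkowski-type bound on the degree needed to make $\End A$ rational, and a bound on the roots of unity in the center of $\End A$). There is no Poincar\'e decomposition, no isogeny-degree control, and no sum over ray class fields. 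Your route, by contrast, attempts a direct generalization of the BCP17 analytic argument: decompose, apply the main theorem of CM factor by factor, and then control a density sum over CM fields.

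The concrete gap is in your first step. You assert that the degree of the $F$-isogeny $A\to\prod B_i^{n_i}$ ``can be controlled \ldots contributing only a uniformly bounded factor to $\#A(F)[\tors]$.'' That is not true. Masser--W\"ustholz bounds the minimal isogeny degree in terms of the Faltings height and $[F:\Q]$, neither of which is uniformly bounded over the family $\mathcal{A}_{\CM}(g)$; and ``choosing a maximal-order model'' replaces $A$ by an isogenous abelian variety whose conductor-index over $\End A$ is unbounded, so the isogeny degree (and hence the discrepancy in torsion) is not uniformly bounded. Consequently the reduction to $F$-simple factors does not come for free, and your third step's exponent $1/g$ is not justified without it. (Notice that the paper never faces this problem: Gaudron--R\'emond applies to $A$ itself.)

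Beyond that, the remaining two steps are sketches of two nontrivial results rather than proofs. The identification $B(F)[\tors]=B[\mathfrak{n}]$ and the inclusion of a ray class field of the reflex $K^*$ in $FK^*$ is precisely the heart of Silverberg's and Gaudron--R\'emond's work (one must track the reflex norm, non-maximal orders, and the passage from $\gg_{FK^*}$ back to $\gg_F$), and the ``degree grows at least linearly in $N\mathfrak{n}$'' claim is the quantitative output of that analysis, not a formal consequence of the main theorem of CM. Similarly, the density step requires showing that a divisor of $[F:\Q]$ of the form $(\text{ray class field degree})$ has small density of multiples, uniformly over infinitely many CM fields of degree $\le 2g$ and over ideals $\mathfrak{n}$; the Brauer--Siegel route is plausible (ineffectivity is acceptable here) but would have to be written out, whereas the paper sidesteps the entire issue by converting (P2) into divisibility by $(p-1)/c$ and citing Erd\H{o}s--Wagstaff. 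So while your plan is in the spirit of the $g=1$ proof in \cite{BCP17}, the paper's formalism plus the Gaudron--R\'emond input is a materially different and much shorter argument, and your proposal as written is incomplete at the isogeny step and in each of the two places where it defers to ``a direct generalization'' of known work.
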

\noindent
The next result was communicated to us by Filip Najman.

\begin{thm}
\label{NAJMANTHM}
Let $\mathcal{A}(1)$ be the family of all elliptic curves (over all number fields).  For each $B \in \Z^+$, the set
$\mathcal{S}(\mathcal{A}(1),B)$ contains all but finitely many positive integers.  Thus torsion is \emph{not} typically
bounded on $\mathcal{A}(1)$.
\end{thm}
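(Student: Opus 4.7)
The plan is to parametrize the desired elliptic curves by means of the modular curves $X_1(N)$ and reduce the problem to the existence of closed points of all sufficiently large degrees on such curves. Fix $N$ with $N \geq \max(B,4)$; then $Y_1(N)$ is a fine moduli space, so any non-cuspidal closed point $x \in X_1(N)$ with residue field of degree $d$ over $\Q$ corresponds to an elliptic curve $E$ over a number field $F$ of degree $d$ carrying a point of exact order $N$, whence $\# E(F)[\tors] \geq N \geq B$. It therefore suffices to show that for every sufficiently large $d$, the curve $X_1(N)_{/\Q}$ admits a non-cuspidal closed point of degree exactly $d$.

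To this end, we would prove the following general fact: if $X/\Q$ is a smooth, projective, geometrically integral curve possessing a $\Q$-rational point $P_0$, then $X$ has closed points of every sufficiently large degree, and for any prescribed finite set $Z \subset X$ these points may be taken to miss $Z$. Specializing to $X = X_1(N)$ (which has the rational cusp $\infty$) with $Z$ the cuspidal locus then settles the theorem. The proof of the general fact runs via Riemann--Roch and Hilbert irreducibility: for $d \geq 2g(X)+1$ the complete linear system $|d P_0|$ is very ample, so a generic $\Q$-rational projection yields a degree-$d$ morphism $f \colon X \to \P^1_{\Q}$; Hilbert irreducibility applied to the corresponding function field extension $\Q(t) \hookrightarrow \Q(X)$ then produces, for a density-one set of $t \in \Q$, a fiber $f^{-1}(t)$ consisting of a single closed point of degree $d$ on $X$, and we discard the finitely many $t$ whose fibers meet $Z$.

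The main obstacle is really just the existence-of-closed-points lemma above; it is a classical consequence of Hilbert irreducibility, but demands care to verify that the generic fiber of $f$ is genuinely a single closed point of full degree $d$ rather than a proper union of points of smaller degree, and to arrange base-point-freeness of the chosen pencil so that $f$ is a morphism. Once that is in place, the argument is a clean blend of moduli-theoretic and arithmetic inputs: no deep torsion-classification theorem is needed, and the choice of $N$ is completely at our disposal since $X_1(N)$ always carries a rational cusp.
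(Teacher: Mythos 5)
Your proposal is correct and follows essentially the same route as the paper: both reduce to showing that $X_1(N)_{/\Q}$ (which has the rational cusp at infinity) has closed points of all sufficiently large degree, then prove the general curve-level lemma by combining Riemann--Roch (to produce a degree-$d$ morphism $C \to \P^1$) with Hilbert irreducibility (to find infinitely many fibers that are single closed points of degree $d$). Two minor cosmetic differences: the paper obtains a degree-$d$ map directly as a rational function with polar divisor exactly $d[P]$, which works for $d \geq 2g$, whereas your very-ampleness-and-generic-projection route costs you one more, $d \geq 2g+1$; and the paper also handles $N \leq 3$ via vanishing of the field-of-moduli obstruction for modular curves, which you neatly sidestep by just taking $N \geq 4$.
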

\noindent
So we must restrict the family $\mathcal{A}(1)$ in some way to get typical boundedness.   Our remaining results address this.
%We can get some results unconditionally, but we can get much stronger results by assuming some standard conjectures.

\begin{thm}
\label{FZEROTHM}
Let $F_0$ be a number field that does not contain the Hilbert class field of any imaginary quadratic field.  If $[F_0:\Q] \geq 3$,
we assume the Generalized Riemann Hypothesis (GRH).  Then torsion is typically bounded on the family $\mathcal{E}_{F_0}$
of all elliptic curves $E$ defined over a number field $F \supset F_0$ such that $j(E) \in F_0$.
\end{thm}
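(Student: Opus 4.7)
\emph{Proof plan.} The plan is to split $\mathcal{E}_{F_0}$ into its CM and non-CM parts. The CM part is contained in $\mathcal{A}_{\CM}(1)$, so Theorem \ref{BIGCMTHM} (at $g = 1$) handles it directly; by Remark \ref{ABBEYERMARK}, it then suffices to treat the non-CM subfamily.

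For a non-CM $E/F \in \mathcal{E}_{F_0}$, the first step is to fix an $F_0$-model $E_0/F_0$ with $j(E_0) = j(E)$, so that $E_{/F}$ is a twist of $(E_0)_{/F}$---quadratic when $j(E) \notin \{0,1728\}$, and handled analogously by cubic or sextic twists in the exceptional cases. A short Galois-cohomology calculation then shows that, for any odd prime $p$, the presence of an element of $E(F)$ of order $p^k$ is equivalent to $\rho_{E_0,p^k}(\gg_F)$ acting on some order-$p^k$ vector $v \in E_0[p^k]$ via a character of $\gg_F$ of order dividing $2$. The central input will then be the following uniform-in-$E_0$ statement: there exists $N = N(F_0)$ such that for every non-CM $E_0/F_0$ and every prime $p > N$, the image $\rho_{E_0,p}(\gg_{F_0})$ is either the full $\GL_2(\F_p)$ or contained in the normalizer of a non-split Cartan subgroup. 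I would assemble this from Mazur's theorem ($F_0 = \Q$) and Momose's isogeny theorem ($[F_0:\Q]=2$; it is here that the hypothesis that $F_0$ contains no imaginary quadratic Hilbert class field is used) for the Borel case, the Bilu--Parent--Rebolledo theorem for the split-Cartan-normalizer case, and Serre's argument for exceptional projective images; when $[F_0:\Q]\geq 3$ each of these ingredients is replaced by an effective uniform version relying on Chebotarev-type bounds available under GRH, which is the source of the GRH hypothesis in the theorem.

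Once $N$ is in hand, both surviving possibilities have $\rho_{E_0,p}(\gg_{F_0})$ acting transitively on $\P^1(E_0[p])$; lifting mod-$p$ surjectivity to mod-$p^k$ in the standard way, the subgroup of $\GL_2(\Z/p^k)$ realizing the ``fixes $v$ up to sign'' condition has index $p^{2k-2}(p^2-1)/2$. This forces $p^{2k-2}(p^2-1)[F_0:\Q]/2$ to divide $d := [F:\Q]$ whenever $E(F)$ contains a point of order $p^k$ with $p > N$. To conclude: given $\epsilon > 0$, if $\#E(F)[\tors]\geq B$, the exponent $m$ of $E(F)[\tors]$ satisfies $m \geq \sqrt B$, and summing the above divisibility densities over all $m \geq \sqrt B$ with prime factors exceeding $N$ yields the bound $\sum_{m \geq \sqrt B} 2^{\omega(m)}/(m^2[F_0:\Q]) = O((\log B)/(\sqrt B \cdot [F_0:\Q]))$, which tends to $0$ as $B \to \infty$. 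The contributions from small primes $p \leq N$ are controlled by applying Serre's open-image theorem to each fixed $p \leq N$ together with Faltings-type finiteness for rational points on the modular curves parametrizing proper sub-images of $\GL_2(\Z/p^k)$; this yields $\overline{\delta}(\mathcal{S}(\mathcal{E}_{F_0},B_\epsilon)) \leq \epsilon$ for $B_\epsilon$ sufficiently large.

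The hardest step is the uniform-in-$E_0$ surjectivity input $N(F_0)$ in the case $[F_0:\Q] \geq 3$: effectively and uniformly ruling out Borel, split-Cartan-normalizer, and exceptional images over a general number field relies on effective Chebotarev-style bounds currently only known under GRH, which is the source of the GRH hypothesis in the statement.
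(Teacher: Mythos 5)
Your high-level scaffolding — split off the CM part via Theorem \ref{BIGCMTHM} and Remark \ref{ABBEYERMARK}, descend to an $F_0$-model $E_0$, pass to a twist, and argue via Dickson's classification of $\rho_{E_0,p}(\gg_{F_0})$ — is the same as the paper's, but there is a genuine gap at the central step. You assert that there is $N = N(F_0)$ such that for every non-CM $E_0/F_0$ and every prime $p>N$ the mod-$p$ image is \emph{either full $\GL_2(\F_p)$ or contained in the normalizer of a nonsplit Cartan}, i.e.\ you try to \emph{exclude} both the Borel and the split-Cartan-normalizer possibilities uniformly in $E_0$. For the Borel case this is exactly what Mazur / Momose / Larson--Vaintrob (the last under GRH) provide, and the paper uses precisely those inputs. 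But you also cite Bilu--Parent--Rebolledo to kill the split-Cartan-normalizer case. That theorem is a statement about $X_{\mathrm{sp}}^+(p)(\Q)$ only; no analogue over a general number field $F_0$ is known, even under GRH, and ``effective Chebotarev'' is not a substitute (GRH enters only through Larson--Vaintrob's isogeny bound, not through any mechanism that rules out dihedral projective images). So the uniform claim you want as input is not available. The paper avoids this entirely: it does not exclude the $N C_{\mathrm{sp}}$ case but instead proves the needed divisibility $\frac{p-1}{c}\mid[F_0(P):\Q]$ directly by group theory inside $N C_{\mathrm{sp}}$, using Lozano-Robledo's structural lemma (Lemma \ref{LRLEMMA6.6}) on subgroups of $N C_{\mathrm{sp}}$ fixing a line. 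That is the key idea your proposal is missing.

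A second, smaller issue: your treatment of the small primes $p\leq N$ (``Serre's open-image theorem together with Faltings-type finiteness on modular curves parametrizing proper sub-images'') is too vague to substitute for what is actually needed, namely the divisibility ``$p^n$-torsion $\Rightarrow p^N\mid[F:\Q]$'' of Condition (P1). The paper establishes this cleanly from Merel's uniform boundedness theorem in Theorems \ref{6} and \ref{THM7}; Serre's open-image theorem is a per-curve statement and does not by itself give the uniform divisibility. Similarly, the final density estimate should rest on the Erd\H os--Wagstaff theorem (as in Theorem \ref{P1P2THM}) rather than the ad hoc summation $\sum_{m\geq\sqrt{B}} 2^{\omega(m)}/(m^2[F_0:\Q])$: the Erd\H os--Wagstaff input is exactly what controls degrees divisible by some $(\ell-1)/c$ with $\ell$ large, and your sum does not cleanly capture that set. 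To repair the proposal: drop the attempt to exclude the split-Cartan-normalizer case and instead compute the orbit-size divisibility inside each of the Dickson cases; route (P1) through Merel; and invoke Erd\H os--Wagstaff for the density count.
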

\noindent
For $d \in \Z^+$, we introduce a hypothesis $\operatorname{SI}(d_0)$ defined as follows:
\[ \operatorname{SI}(d_0)\!:\quad\text{\begin{minipage}[t]{30em}
There is prime $\ell_0 = \ell_0(d_0)$ such that for all primes $\ell > \ell_0$, the modular curve $X_0(\ell)$ has 
no noncuspidal non-CM points of degree $d_0$. 
%   The set of prime numbers $p$ such that the modular curve $X_0(p)_{/\Q}$ has a noncuspidal,
%non-CM point of degree $d_0$ is finite.
\end{minipage}} \]

\begin{remark}
\label{REMARK1.8} When $d_0=1$, Mazur showed that $\operatorname{SI}(d_0)$ holds and that the optimal value of $\ell_0$ is $37$ \cite{Mazur78}.
Whether $\operatorname{SI}(d_0)$ holds for any $d_0 \geq 2$ is not known, though it is a folk conjecture that $\operatorname{SI}(d_0)$
holds for all $d_0 \in \Z^+$.
\end{remark}

\begin{thm}
\label{DZEROTHM}
Let $d_0 \in \Z^+$.  If $\operatorname{SI}(d_0)$ holds, then torsion is typically bounded on the family $\mathcal{E}_{d_0}$
of all elliptic curves $E_{/F}$ defined over a number field $F$ such that $[\Q(j(E)):\Q] = d_0$.
\end{thm}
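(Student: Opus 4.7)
My plan is to split $\mathcal{E}_{d_0}$ into its CM and non-CM subfamilies. By Remark~\ref{ABBEYERMARK} it suffices to establish typical boundedness on each; the CM subfamily sits inside $\mathcal{A}_{\CM}(1)$, on which torsion is typically bounded by Theorem~\ref{REMOTIVETHM}. The substance is therefore the non-CM case, where I would use $\operatorname{SI}(d_0)$ together with the Galois action on $\ell$-torsion to convert the existence of a torsion point into a divisibility constraint on $d=[F:\Q]$.

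Fix a non-CM $E_{/F}\in\mathcal{E}_{d_0}$, set $K=\Q(j(E))$ of degree $d_0$, and let $\bar\rho_\ell(\mathfrak{g}_K)\subseteq\PGL_2(\F_\ell)$ denote the projective mod-$\ell$ Galois image of any $K$-model of $E$ (this is twist-invariant). For $\ell>\ell_0(d_0)$, $\operatorname{SI}(d_0)$ forbids non-cuspidal non-CM points of degree $d_0$ on $X_0(\ell)$, which, applied to the pair (any $K$-model of $E$, any $K$-rational cyclic subgroup of order $\ell$), is equivalent to $\bar\rho_\ell(\mathfrak{g}_K)$ having no fixed point on $\mathbb{P}^1(\F_\ell)$. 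If $E(F)[\ell]\neq 0$, then $\bar\rho_\ell(\mathfrak{g}_F)$ fixes some line $\lambda$, the orbit $\bar\rho_\ell(\mathfrak{g}_K)\cdot\lambda$ has size $m\geq 2$ dividing $[F:K]=d/d_0$, and promoting from a fixed line to a fixed nonzero vector enlarges the index by a factor of at most $\ell-1$. In the generic case $\bar\rho_\ell(\mathfrak{g}_K)\supseteq\PSL_2(\F_\ell)$, the orbit in $E[\ell]\setminus\{0\}$ has size exactly $\ell^2-1$, forcing $d_0(\ell^2-1)\mid d$, a condition defining a set of density $1/(d_0(\ell^2-1))$. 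Summing $\sum_{\ell>L}1/(d_0(\ell^2-1))=O(1/(d_0 L))$ bounds this generic contribution by any prescribed $\epsilon/2$ once $L$ is large enough.

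Small primes $\ell\leq\max(\ell_0,L)$ would be handled by the analogous divisibility argument applied to the mod-$\ell^k$ representation: for $k$ chosen in terms of Merel's uniform bound, existence of $\ell^k$-torsion again forces $d$ into a thin arithmetic progression. What remains is the \emph{exceptional} $E$, where $\bar\rho_\ell(\mathfrak{g}_K)$ is contained in the normalizer of a split or non-split Cartan, or is an exceptional projective subgroup. The corresponding $j$-invariants are non-cuspidal non-CM degree-$d_0$ points on modular curves such as $X_{\mathrm{sp}}^+(\ell)$ and $X_{\mathrm{ns}}^+(\ell)$; by the gonality growth of these curves together with Faltings' theorem only finitely many such $j$-invariants exist for each fixed $\ell$, and Serre's open-image theorem ensures that each individual $E$ has exceptional image at only finitely many $\ell$, so the exceptional contribution can in principle be organized into a summable bound.

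The main obstacle I foresee is quantifying this exceptional contribution uniformly in $E$: $\operatorname{SI}(d_0)$ is a statement only about $X_0(\ell)$, and for $d_0\geq 2$ the analogous uniform statements for degree-$d_0$ points on $X_{\mathrm{sp}}^+(\ell)$ and $X_{\mathrm{ns}}^+(\ell)$ are not fully known. The proof presumably leverages the \emph{sparseness} of such exceptional $j$-invariants (via gonality growth and Faltings) rather than their total absence, which requires care to ensure that the densities of the resulting arithmetic progressions sum to something within the prescribed $\epsilon$.
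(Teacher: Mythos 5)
Your proposal takes a different route from the paper's and, as you yourself sense, it has a genuine gap in the exceptional (Cartan-normalizer and exceptional-projective) cases. The paper does \emph{not} handle those cases by finiteness or sparseness of $j$-invariants at all. Instead it establishes an abstract Condition~(P2) --- a single constant $c$ with $\frac{p-1}{c}\mid[F:\Q]$ whenever $E(F)$ has a point of order $p$, \emph{for every} prime $p$ and \emph{every} $E$ in the family --- and then invokes the Erd\H os--Wagstaff theorem inside Theorem~\ref{P1P2THM} to convert that uniform divisibility into a density bound. For the Cartan-normalizer cases the paper derives the divisibility $\frac{p-1}{2}\mid[F_0(P):\Q]$ by pure group theory: Lozano-Robledo's Lemmas (Lemma~\ref{LRLEMMA6.6} and \cite[Lemma 7.4]{LR13}) show that any subgroup $H\subset NC_s$ or $H\subset NC_{\rm ns}$ fixing a nonzero vector is tiny relative to $G$, and from this one extracts the divisibility directly, \emph{without} throwing away any curves. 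For the exceptional projective images the paper uses Etropolski's explicit bound $p\le 15d_0+1$, which is a uniform-in-$E$ statement that Serre's open image theorem (uniform in $\ell$ for fixed $E$, not in $E$) and Faltings (finitely many $j$'s per $\ell$, but nothing about which $\ell$) do not by themselves give.

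Concretely, the step in your plan that fails is the claim that sparseness of exceptional $j$-invariants ``can in principle be organized into a summable bound.'' For a fixed large $\ell$, knowing that the degree-$d_0$ points on $X_{\rm sp}^+(\ell)$ or $X_{\rm ns}^+(\ell)$ are finite tells you nothing about the set of degrees $d=[F:\Q]$ over which twists of those finitely many curves acquire $\ell$-torsion; a single such $j$-invariant contributes infinitely many fields $F$, and a priori the degrees $[F_0(P):\Q]$ could fail to be confined to a thin progression. This is precisely what the Lozano-Robledo lemmas rule out: they show that even with image in a Cartan normalizer, fixing a vector costs an index at least $\frac{p-1}{2i(p)}$, hence the needed progression. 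Likewise, your remark that promoting a fixed line to a fixed vector ``enlarges the index by a factor of at most $\ell-1$'' is going the wrong way: you need a \emph{lower} bound on the index of the stabilizer, and in the worst case (Borel image) the stabilizer of a vector has index only $\ell-1$, which is exactly why the Borel case must be excluded by $\operatorname{SI}(d_0)$ and cannot be handled group-theoretically. Finally, your sum-over-primes of densities $\sum_{\ell>L}1/(d_0(\ell^2-1))$ works in the generic case, but the paper's Erd\H os--Wagstaff route handles all non-Borel, non-exceptional cases at once with the weaker and uniform conclusion $\frac{\ell-1}{2}\mid d$; this avoids needing to establish the stronger $(\ell^2-1)\mid d$-type divisibility, which does not hold in the Cartan cases anyway.
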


\begin{remark}
This paper is cognate to another work \cite{POLYSKETCH}, written in parallel, giving upper bounds on $\# E(F)[\tors]$ for an elliptic curve $E/F$ that are polynomial in $[F:\Q]$,
under the same hypotheses as Theorems \ref{FZEROTHM} and \ref{DZEROTHM}.
\end{remark}

\subsection{The plan of the paper}
\noindent
In $\S$2 we deduce Theorem \ref{NAJMANTHM} from some general results on degrees of closed
points of varieties over Hilbertian fields.  Our main tool is the Riemann-Roch theorem.
\\ \indent
In $\S$3 we present a formalism for proving typical boundedness of families of abelian varieties, reducing the task
to showing divisibilities on the degrees of the field of definition of a torsion point of order a large power of a fixed prime -- Condition (P1) -- and
of the field of definition of a prime order torsion point -- Condition (P2).  We show a relationship between strong uniform
boundedness results and Condition (P1), reducing the proofs of Theorems \ref{BIGCMTHM}, \ref{FZEROTHM} and
\ref{DZEROTHM} to verifying Condition (P2).
\\ \indent
In $\S$4 we verify Condition (P2) for $\mathcal{A}_{\operatorname{CM}}(g)$.  We use a recent result of Gaudron-R\'emond \cite{GR17} that builds on important work of Silverberg \cite{Silverberg88, Silverberg92a}.
\\ \indent
In $\S$5 we verify Condition (P2) for the families $\mathcal{E}_{F_0}$ and $\mathcal{E}_{d_0}$ under the assumptions in Theorems \ref{FZEROTHM} and \ref{DZEROTHM}, thus proving those results.  Our general strategy is heavily influenced by work of
Lozano-Robledo \cite{LR13} that gives lower bounds on the degree of the field of definition of a point of prime order $p$
for an elliptic curve defined over $\Q$.  Whereas Lozano-Robledo used the complete classification of rational isogenies on
elliptic curves over $\Q$, we instead use finiteness theorems of Mazur, Momose and Larson-Vaintrob.

\subsubsection*{Acknowledgments}
Our proof of Theorem \ref{BIGCMTHM} draws from results in the literature that were brought to our attention by Abbey Bourdon.  As mentioned above, Theorem \ref{NAJMANTHM} was communicated to us by Filip Najman.  This result has become a key part of the narrative
of our paper, and we are very grateful to him.  Theorem \ref{DZEROTHM} is part of the thesis work of the second author under the
direction of the first author.

\section{The degrees of closed points on an algebraic variety}
\noindent
In this section we prove Theorem \ref{NAJMANTHM}.  As observed by Najman, the key observation is that a curve defined over a
number field $k$ that has a $k$-rational point must have infinitely many closed points of degree $d$ for all sufficiently large $d$, applied to the modular curves $X_1(N)_{/\Q}$.  In fact we will prove a generalization of Najman's observation to closed points on varieties over any Hilbertian field, Theorem \ref{MONSTERNAJMANTHM}.

\medskip\noindent
Let $k$ be a field.  A \textbf{variety} $V_{/k}$ is a finite-type $k$-scheme.  By a \textbf{nice variety}
$V_{/k}$ we mean a variety that is smooth, projective and geometrically integral.

\medskip\noindent
The natural context of the results of this section is the class of  \emph{Hilbertian fields} (see e.g. \cite{Fried-Jarden}), namely those for which the conclusion of
Hilbert's irreducibility theorem holds.  For our purposes here it suffices to know that number fields are Hilbertian \cite[Ch. 13]{Fried-Jarden}: Hilbert's theorem.

\subsection{Curves with a $k$-rational point}
\begin{thm}
\label{THM1}
Let $k$ be a Hilbertian field, let $C_{/k}$ be a nice curve of genus $g$, and let $P \in C(k)$.  Then there is $D \in \Z^+$ such that for all $d \geq D$, the curve $C$ has infinitely many closed points of degree $d$.  More precisely:
\begin{itemize}
\item[a)] If $g = 0$, we may take $D = 1$.
\item[b)] If $g \geq 1$, we may take $D = 2g$.  If $P$ is not a Weierstrass point, we may take $D = g+1$.
\end{itemize}
\end{thm}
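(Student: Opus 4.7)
The plan is to reduce, for each $d \geq D$, the existence of closed points of degree $d$ on $C$ to the construction of a finite morphism $f\colon C \to \P^1_k$ of degree exactly $d$, and then to apply Hilbert's irreducibility theorem to the associated degree-$d$ extension of function fields $k(C)/k(f)$. Provided this extension is separable and $k$ is algebraically closed in $k(C)$, Hilbert's theorem produces infinitely many $t_0 \in k$ for which the scheme-theoretic fiber $f^{-1}(t_0)$ is a single closed point of $C$ of residue degree $d$; distinct $t_0$ give distinct such points, so we are done.

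For part (a), the $k$-rational point identifies $C$ with $\P^1_k$, and closed points of degree $d$ on $\P^1_k$ correspond to monic irreducible polynomials of degree $d$ in $k[X]$. Applying Hilbert's theorem to $X^d - T \in k(T)[X]$, which is irreducible by Eisenstein at the prime $T \in k[T]$, produces infinitely many such polynomials, yielding $D = 1$.

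For part (b) with $g \geq 1$, Riemann--Roch gives $\ell(dP) - \ell((d-1)P) \in \{0,1\}$, with difference $1$ precisely when there is a rational function on $C$ whose pole divisor is exactly $dP$; such a function gives a degree-$d$ morphism $C \to \P^1_k$. The Weierstrass gap theorem (itself a consequence of Riemann--Roch) shows that there are exactly $g$ exceptional values of $d$ in $\{1, \ldots, 2g-1\}$, so every $d \geq 2g$ is a non-gap and $D = 2g$ works in general. When $P$ is not a Weierstrass point the gap set is forced to be $\{1, \ldots, g\}$, so every $d \geq g + 1$ is a non-gap, improving $D$ to $g + 1$.

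The main subtlety lies in verifying the hypotheses of Hilbert irreducibility for $k(C)/k(f)$. Geometric integrality of $C$ ensures $k$ is algebraically closed in $k(C)$, which is the correct ``constant field'' condition. Separability of the cover is automatic in characteristic $0$; in positive characteristic one must choose the element of $L(dP)$ so as to give a separable cover (and likewise choose the polynomial in part (a) when $p \mid d$). I expect this characteristic-$p$ accounting to be the main technical obstacle, though it should be harmless because the inseparable choices form a proper closed subset of $L(dP)$, leaving a generic choice that works.
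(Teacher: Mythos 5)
Your proposal takes essentially the same route as the paper: Riemann--Roch at $P$ produces a degree-$d$ morphism $C \to \P^1$ for each $d \geq D$ (the paper derives the same non-gap count that you package as the Weierstrass gap theorem), and Hilbert irreducibility applied to the resulting function-field extension then yields infinitely many closed points of degree $d$. The only technical wrinkle where the two arguments differ is positive-characteristic inseparability: you propose to choose $f \in L(dP)$ outside the proper closed locus of $p$-th powers and of $L((d-1)P)$, which does work, whereas the paper avoids the genericity argument altogether by applying Hilbert to the maximal separable subextension $l \subseteq k(C)$ of $k(C)/k(\P^1)$ and then using that every place of $l$ extends uniquely to the purely inseparable top $k(C)/l$, so that an arbitrary degree-$d$ morphism suffices.
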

\begin{proof}
Step 0: If $g = 0$, then since $C(k) \neq \varnothing$ we have $C \cong \P^1$ and (Hilbertian fields are infinite)
$C$ has infinitely many closed points of degree $d$ for all $d \in \Z^+$.  Henceforth we suppose $g \geq 1$.

\medskip
\noindent Step 1: Let $K$ be a canonical divisor for $C$, and let $d \geq 2g-1$.  Then $\deg(K-d[P]) < 0$, so the Riemann-Roch space
$H^0 \mathcal{L}(n[P])$ has dimension $d-g+1$.  Thus $H^0 \mathcal{L} ((d+1)[P]) \supsetneq H^0 \mathcal{L}((d)[P])$, so for all $d \geq 2g$ there
is $f \in k(C)$ with polar divisor $d[P]$, so $f\colon C \ra \P^1$ is a morphism of degree $d$.  If $P$ is not a Weierstrass point
then $\dim H^0 \mathcal{L}(d[P])  = 1$ for all $1 \leq d \leq g$, and thus  $\dim H^0 \mathcal{L}(d[P]) = d+1-g$ for all
$d \geq g$.  As above, there is a degree $d$  morphism $f\colon C \ra \P^1$ for all $d \geq g+1$.

\medskip\noindent
Step 2: Let $f\colon C \ra \P^1$ be a morphism of degree $d$.  Let $l$ be the maximal separable subextension of $k(C)/k(\PP^1)$.
Applying the Hilbert Irreducibility Theorem to the Galois closure $m$ of $l/k(\PP^1)$, we see that there are infinitely many
$Q \in \PP^1(k)$ that are inert in $m/k(\P^1)$, hence also inert in $l/k(\P^1)$.  The extension $k(C)/l$ is purely inseparable,
so every place of $l$ is inert in $k(C)$ (see e.g. \cite[Lemma 14.20]{CA} for the affine case of this, to which we may
reduce).  Thus there are infinitely many $Q \in \P^1$ which are inert in $k(C)/k(\P^1)$, and this yields infinitely many
degree $d$ closed points on $C$.
\end{proof}

\begin{cor}
\label{NAJMANCOR2}
\label{COR2}
Let $N \in \Z^+$.  Then for all $d \geq \max\{1,2g(X_1(N))\}$, there is a sequence $\{(F_n,E_n,P_n)\}_{n=1}^{\infty}$ such that
$F_n$ is a number field of degree $d$, $(E_n)_{/F_n}$ is an elliptic curve, $j(E_n) \neq j(E_m)$ for all $m \neq n$ and $P_n \in E_n(F_n)$ is a point of order $N$ and $(E_n,P_n)$ cannot be defined over any proper subfield of $F_n$.
\end{cor}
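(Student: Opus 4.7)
The plan is to apply Theorem~\ref{THM1} to the modular curve $X_1(N)_{/\Q}$. Since $X_1(N)$ is a nice curve over $\Q$ possessing the cusp at infinity as a $\Q$-rational point, Theorem~\ref{THM1} guarantees that for each $d \geq \max\{1, 2g(X_1(N))\}$ there are infinitely many closed points of $X_1(N)$ of degree $d$; discarding the finitely many cuspidal points, we may restrict attention to noncuspidal closed points of degree $d$.

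Next I would invoke the moduli interpretation of $X_1(N)$: a noncuspidal closed point of degree $d$ with residue field $F$ corresponds, for $N \geq 4$, to an isomorphism class of pairs $(E,P)$ with $E/F$ an elliptic curve and $P \in E(F)$ a point of order $N$, and this pair admits no model over any proper subfield of $F$ (by minimality of the residue field of a closed point). This step uses that $X_1(N)$ is a fine moduli space for $N \geq 4$; for $N \in \{1,2,3\}$ one has $g(X_1(N)) = 0$, so only the case $d = 1$ is at issue, and the conclusion is then easily verified by hand, e.g.\ from a Tate normal form parametrizing elliptic curves over $\Q$ with the prescribed torsion structure.

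Finally, to arrange that the $j$-invariants of the $E_n$ are pairwise distinct, I would use that the forgetful morphism $j\colon X_1(N) \to X(1) \cong \PP^1_\Q$ is finite; hence each value of $j$ is attained by at most finitely many closed points of $X_1(N)$. From the infinite collection of noncuspidal closed points of degree $d$, I extract a subsequence with pairwise distinct $j$-invariants, yielding the desired sequence $\{(F_n, E_n, P_n)\}_{n=1}^\infty$.

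The substantive input is entirely Theorem~\ref{THM1}; the remaining steps are bookkeeping. The only real subtlety is the coarse-versus-fine moduli distinction for $N \leq 3$, which is handled separately as indicated above and is not a serious obstacle.
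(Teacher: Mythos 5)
The overall strategy matches the paper's: apply Theorem~\ref{THM1} to $X_1(N)$ with the cusp at infinity, pass to a subsequence of noncuspidal points with distinct $j$-invariants using finiteness of $X_1(N)\to X(1)$, and read off the pairs $(E_n,P_n)$ via the moduli interpretation. Your treatment of $N\geq 4$ is essentially the paper's argument. However, your handling of $N\leq 3$ has a genuine error.

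You claim that for $N\in\{1,2,3\}$, ``only the case $d=1$ is at issue.'' That misreads the hypothesis: since $g(X_1(N))=0$ here, $\max\{1,2g(X_1(N))\}=1$, which means the statement must be proved for \emph{all} $d\geq 1$, not just $d=1$. So the ``Tate normal form over $\Q$'' argument, which only produces degree-$1$ examples, does not cover the required range of degrees. More importantly, even setting this aside, you have not addressed the actual obstacle for small $N$: when $X_1(N)$ is only a coarse moduli space, the residue field of a closed point is the field of \emph{moduli}, not necessarily a field of \emph{definition}, so ``$(E_n,P_n)$ cannot be defined over a proper subfield of $F_n$'' does not follow from ``$F_n$ is the residue field'' without further input. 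The paper resolves this by invoking the fact that the obstruction to being defined over the field of moduli vanishes for closed points on modular curves; you would need to supply this or some equivalent argument to close the gap.
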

\begin{proof}
Let $d \geq \max\{1,2g(X_1(N))\}$.  We apply Theorem \ref{THM1} with $k = \Q$, $C = X_1(N)$ and $P$ the cusp at infinity to get
a sequence $\{x_n\}_{n=1}^{\infty}$ of distinct degree $d$ closed points on $X_1(N)$.  Since $X_1(N) \ra X(1)$ is a finite map, by passing
to a subsequence we may assume that $x_n \in Y_1(N)$ for all $n$ and that $j(x_n) \neq j(x_m)$ for all
$m \neq n$.  Let $F_n$ be the residue field of $x_n$.  Since $Y_1(N)$ is a fine moduli space for $N \geq 4$, each $x_n$ corresponds to a unique pair $(E_n,P_n)$.  When $N \leq 3$ we use that the obstruction to being defined over the field of moduli vanishes
for closed points on modular curves.  Since $F_n$ is the field of moduli of $(E_n,P_n)$, the pair cannot be defined over a proper subfield of $F_n$.
\end{proof}

\begin{remark}
By \cite[Thm. 1]{KK96}, we have
\[ g(X_1(N)) = \begin{cases} 0 & \text{if $1 \leq N \leq 4$,} \\ 1 + \frac{N^2}{24}\prod_{p \mid N} \left(1-\frac{1}{p^2} \right) -
\frac{1}{4} \sum_{d \mid N} \varphi(d) \varphi(\frac{N}{d}) & \text{if $N \geq 5$.}
\end{cases}\]
Thus for all $N \in \Z^+$, we have $g(X_1(N))\le N^2/24 + 1$, so Corollary \ref{NAJMANCOR2} implies that for every degree $d \ge N^2/12 + 2$, there is an elliptic curve $E$ over a degree $d$ number field $F$ for which $N \mid \exp E(F)[{\rm tors}]$. Moreover, $E$ can be taken to not have CM. Indeed, our examples $E_{/F}$ have infinitely many distinct $j$-invariants, whereas there are only finitely many CM $j$-invariants of any given degree. Consequently, defining $T_{\neg{\rm CM}}(d)$ as the supremum of $\#E(F)[\rm tors]$ for a non-CM elliptic curve $E$ defined over a degree $d$ number field $F$, we have
\[ \liminf_d \frac{T_{\neg{\rm CM}}(d)}{\sqrt{d}} > 0. \]
On the other hand, by \cite[Thm. 6.4]{TRUTHII} we have
\[ \limsup_d \frac{T_{\neg{\rm CM}}(d)}{\sqrt{d \log \log d}} \geq \sqrt{ \frac{\pi^2 e^{\gamma}}{3}}. \]
It is natural to conjecture that $\limsup_d \frac{T_{\neg{\rm CM}}(d)}{\sqrt{d \log \log d}} < \infty$: whether this holds
was raised as a question by Hindry-Silverman \cite{HS99}. If yes, then $T_{\neg{\rm CM}}(d)$ grows remarkably regularly, in sharp contrast with the CM case (compare eq. \eqref{eq:truthIIeq} with Theorem \ref{MOTIVETHM}).
\end{remark}

\subsection{The proof of Theorem \ref{NAJMANTHM}}
\noindent
Let $\mathcal{A}(1)$ be the family of all elliptic curves.  Corollary \ref{COR2} gives: for all $B \in \Z^+$,
$\mathcal{S}(\mathcal{A}(1),B)$ contains all but finitely many elements of $\Z^+$, hence has density $1$.

%This shows Theorem \ref{NAJMANTHM}.

%\begin{remark}
%A family $\mathcal{F}$ is Merelian iff $\bigcap_{B \in \Z^+} \mathcal{S}(\mathcal{F},B) = \varnothing$.  Thus not only does
%the family $\mathcal{A}(1)$ fail to be typically bounded, it is as far from being typically bounded as is possible in a Merelian family.
%\end{remark}

\subsection{Arbitrary varieties}
\noindent In the setting of Theorem \ref{THM1}, without the assumption that $C(k) \neq \varnothing$, the conclusion of Theorem \ref{THM1}
need not hold: e.g., of $C_{/k}$ is a nice curve of genus zero with $C(k) = \varnothing$, then $C$ has no
closed points of any odd degree.

\medskip\noindent To go further, recall that the \textbf{index} $I(V)$ of a nice variety $V_{/k}$ is the least positive degree of a $k$-rational zero-cycle on $V$ -- equivalently, the greatest common divisor of all degrees of closed points on $V$.   Here is an analogue of
Theorem \ref{THM1} in the absence of the assumption $C(k) \neq \varnothing$, generalized from curves to all varieties.

%The index is a birational invariant.  Every variety which admits a $k$-rational point has index $1$, but there are varieties of index $1$ %without rational points.
%\\ \\

%\begin{remark}
%a) If $C$ has genus zero and a $k$-rational point, then
%$C \cong_k \PP^1$ and has closed points of all degrees over every Hilbertian field.  \\
%b) Step 1 is an extremely well known and classical argument in the theory of algebraic curves, the beginning of a much longer,
%richer story.  It would be interesting to explore the sharpness of the bound attained: e.g. let $g = 2$.  Then if $C$ admits a rational
%point which is not a hyperelliptic branch point, then evidently there are infinitely many points of degree $d$ for all $d \geq 2$,
%and there need not be infinitely many points of degree $1$ (indeed never when $k$ is a number field, by Faltings' Theorem).
%However, if $C(k)$ is nonempty and consists entirely of hyperelliptic branch points, the argument does not address whether
%$C$ admits a degree $3$ map to $\P^1$ over $k$ or whether there must be infinitely many points of degree $3$.
%\end{remark}

\begin{thm}
\label{MONSTERNAJMANTHM}
Let $k$ be a Hilbertian field, and let $V_{/k}$ be a nice variety of positive dimension and index $I$.  There is $D \in \Z^+$ such that for
all $d \geq D$, the variety $V$ has infinitely many closed points of degree $dI$.
\end{thm}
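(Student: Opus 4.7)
The plan is to reduce to the case of a curve of index exactly $I$ and then mimic the Riemann--Roch plus Hilbert irreducibility strategy behind Theorem \ref{THM1}.

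First, by the definition of index, pick closed points $x_1, \ldots, x_r$ on $V$ with $\gcd_i \deg(x_i) = I$. Using a Bertini-with-assigned-base-points argument in some projective embedding $V \hookrightarrow \P^N$, I would produce a nice curve $C \subseteq V$ containing every $x_i$: at each stage, slice by a general hypersurface of sufficiently high degree through $x_1, \ldots, x_r$, so that (for degree large enough) the resulting linear system separates tangent directions at each $x_i$ and is base-point-free away from them, ensuring a generic member is smooth at every point and, after extension of scalars, geometrically integral. Iterating $\dim V - 1$ times cuts $V$ down to a nice curve $C$ through all the $x_i$. Since closed points of $C$ are closed points of $V$, $I \mid I_C$; since each $x_i$ lies on $C$, $I_C \mid \gcd_i \deg(x_i) = I$; hence $I_C = I$.

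On any nice curve the set of degrees of $k$-rational divisor classes is the subgroup of $\Z$ generated by degrees of closed points, so on $C$ it equals $I\Z$. Thus for every $d \in \Z^+$ there is a $k$-rational line bundle $L_d$ on $C$ of degree $dI$. For $d \geq D_0 \coloneqq \lceil (2g(C)+1)/I \rceil$ one has $\deg L_d \geq 2g(C)+1$, and Riemann--Roch gives $h^0(L_d) \geq 2$ together with the base-point-freeness of $L_d$. A two-dimensional base-point-free subspace of $H^0(L_d)$ then supplies a $k$-morphism $f_d \colon C \to \P^1$ of degree exactly $dI$.

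Finally, I would apply Hilbert irreducibility to $f_d$ verbatim as in Step 2 of the proof of Theorem \ref{THM1}: letting $l$ be the maximal separable subextension of $k(C)/k(\P^1)$ and $m$ its Galois closure, Hilbert's theorem furnishes infinitely many $Q \in \P^1(k)$ inert in $m/k(\P^1)$, hence in $l/k(\P^1)$, and then in $k(C)/k(\P^1)$ by purely inseparable descent. Each such $Q$ pulls back to a closed point of $C$ of degree exactly $dI$; these are also closed points of $V$ of degree $dI$, and there are infinitely many of them.

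The main obstacle is the Bertini-with-base-points step: ensuring the iterated hypersurface sections stay simultaneously smooth, geometrically integral, and passing through $x_1, \ldots, x_r$ (so as to force $I_C = I$) requires some care, especially in positive characteristic where the classical Bertini irreducibility statement can fail. Everything else is a direct adaptation of the Riemann--Roch plus Hilbert irreducibility argument of Theorem \ref{THM1}.
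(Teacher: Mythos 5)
Your proof is correct, and the overall strategy — cut a nice curve $C \subset V$ through closed points realizing the index via a Bertini argument, observe $I(C) = I(V)$, produce a $k$-rational morphism $C \to \P^1$ of degree $dI$ using Riemann--Roch, and finish with Hilbert irreducibility exactly as in Step~2 of Theorem~\ref{THM1} — is the same as the paper's. The one place where you genuinely diverge is the curve step. The paper fixes the closed points $P_1,\ldots,P_r$ with $\gcd d_i = I$, invokes the Frobenius (numerical semigroup) theorem to write $d = \sum n_i d_i$ for all large $d$ divisible by $I$, and applies Riemann--Roch to exhibit a function with polar divisor $\sum n_i [P_i]$; you instead note that $\deg\colon \Pic(C) \to \Z$ has image exactly $I\Z$, pick any $L_d$ of degree $dI \geq 2g+1$, and extract a base-point-free pencil. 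Your version is a bit cleaner and yields the simpler explicit threshold $D_0 = \lceil(2g+1)/I\rceil$, whereas the paper's is more hands-on (it hands you an explicit polar divisor). To make yours airtight you should justify the existence of a $2$-dimensional base-point-free subspace of $H^0(L_d)$: since $k$ is infinite (Hilbertian implies infinite), project the image of $\phi_{|L_d|}$ from a generic codimension-two linear subspace, or, when $h^0 = 2$, observe that the full system already works. Finally, the Bertini-with-assigned-base-points step you rightly flag as delicate is exactly what the paper outsources to Kleiman--Altman \cite{Kleiman-Altman79}, which gives smooth geometrically integral hypersurface sections through a prescribed finite subscheme over an infinite field; citing that result replaces your sketch and removes any concern about positive characteristic.
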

\begin{proof}
Step 1: Suppose $C_{/k}$ is a nice curve of genus $g$.  The case $g = 0$ has already been treated above, so suppose $g \geq 1$.
Let $P_1,\ldots,P_r$ be distinct closed points of $C$ of degrees $d_1,\ldots,d_r$ such that $\gcd(d_1,\ldots,d_r) = I$.  There is a
positive integer $M$ such that if an integer $d$ is at least $M$ and divisible by $I$, then there are $n_1,\ldots,n_r \in \N$
such that $d = n_1 d_1 + \ldots + n_r d_r$: see e.g. \cite[Thm. 1.0.1]{Ramirez-Alfonsin}.  Let
\[ D = \max\{M, 2g-1 + \sum_{i=1}^r d_i\}. \]
Let $d$ be an integer such that $d \geq M$ and $I \mid d$, so we may write $d = \sum_{i=1}^r n_i d_i$ for $n_i \in \N$, and
we may assume without loss of generality that $n_1 \geq 1$.   The divisor $(n_1-1)[P_1] + \sum_{i=2}^r n_i [P_i]$ has
degree at least $2g-1$, so by Riemann-Roch we have
\[ \dim H^0 \mathcal{L} \left(\sum_{i=1}^r n_i [P_i]\right) = d_1 + \dim H^0 \mathcal{L} \left((n_1-1) [P_1] + \sum_{i=2}^r n_i [P_i] \right),\]
so there is a rational function on $C$ with polar divisor $\sum_{i=1}^r n_i [P_i]$ and thus a morphism $f\colon C \ra \P^1$
of degree $d$.  Step 2 of the proof of Theorem \ref{THM1} again applies to show that $C$ admits infinitely many closed points
of degree $d$.

\medskip\noindent
Step 2: Let $V_{/k}$ be a nice variety of index $I$.  There are closed points $P_1,\ldots,P_r$ of $V$ such that
$I = \gcd(\deg(P_1),\ldots,\deg(P_r))$; let $Z$ be the zero-cycle $\sum_{i=1}^r [P_i]$.  Then there is a nice curve $C_{/k}$
with $Z \subset C \subset V$: again, Hilbertian fields are infinite, so one may apply an extension of the Bertini Theorem
due to Kleiman-Altman \cite{Kleiman-Altman79}.  It follows that $C$ has index $I$, and applying Step 1 to $C$ finishes the proof.
\end{proof}
\noindent
Since by definition a variety can only have a closed point of degree $d$ if $d$ is a multiple of the index, Theorem \ref{MONSTERNAJMANTHM}
determines the set of degrees of closed points on any nice variety over a Hilbertian field \emph{up to a finite set}.
% One cannot hope
%for more in this level of generality: e.g. every nice curve over a finite field has index one but the computation of the \emph{minimum}
%degree of a closed point is a difficult problem.

\begin{remark}
Let $C_{/k}$ be a nice curve, and let $f\colon C \ra \P^1$ be a morphism of degree $d$.  Pulling back a rational point on $\P^1$
yields an effective divisor of degree $d$ and thus $I(C) \mid d$.  The \textbf{gonality} $\operatorname{gon}(C)$ of $C$ is the least degree of a finite morphism $C \ra \P^1$, so we have
\[ I(C) \mid \operatorname{gon}(C). \]
We call a curve \textbf{$I$-gonal} if $I(C) = \operatorname{gon}(C)$.  Above we saw every genus zero curve is $I$-gonal
and the degrees of closed points on a genus zero curve are precisely the positive integer multiples of $I(C)$.  The same
argument holds for all $I$-gonal curves.  In some sense, ``most curves over most fields'' are $I$-gonal: if $g \geq 2$, $(\mathcal{M}_g)_{/\Q}$ is the moduli scheme of genus $g$ curves, $k = \Q(\mathcal{M}_g)$ is its function field and $C_{/k}$ is a nice curve
of genus $g$ with field of moduli $k$, then $I(C) = \operatorname{gon}(C) = 2g-2$.
\end{remark}

\begin{remark}\mbox{ }
\begin{enumerate}
\item[a)]
 Let $\F_q$ be a finite field, and let $C_{/\F_q}$ be a nice curve.  The Riemann Hypothesis for $C$ gives $C(\F_{q^d}) \neq \varnothing$ for all sufficiently large $d$.  In particular $C$ has index $1$, a
result of F.K. Schmidt.  If $V_{/\F_q}$ is a nice variety of positive dimension, then by Poonen's finite field
Bertini Theorem \cite{Poonen04}, $V$ contains a nice curve $C_{/\F_q}$.  So $V_{/\F_q}$ admits at least one
closed point of degree $d$ for all sufficiently large $d$.
\item[b)] Some hypothesis on $k$ is necessary to force a nice variety $V_{/k}$ to have closed points of degree
$dI(V)$ for all large $d$: e.g. $k$ must have extensions of these degrees!
\end{enumerate}
%The case in which $k$ is Henselian for a discrete valuation looks interesting.
\end{remark}

\section{A criterion for typical boundedness}
\noindent
If $r$ and $s$ are nonzero rational numbers, we write $r \mid s$ if $\frac{s}{r} \in \Z$.

\subsection{Conditions (P1) and (P2)}
\noindent Consider the following conditions on a family $\mathcal{F}$ of abelian
varieties:

\medskip\noindent\textbf{Condition (P1):} for all primes $p$ and all $N \in \Z^+$, there is $n = n(p,N) \in \Z^+$ such that for all $A_{/F} \in \mathcal{F}$, if $A(F)$ has a point of order $p^n$ then $p^N \mid [F:\Q]$.

\medskip\noindent
\textbf{Condition (P2):} there is $c = c(\mathcal{F}) \in \Z^+$
such that for all prime numbers $p$ and all $A_{/F} \in \mathcal{F}$,
if $A(F)$ has a point of order $p$ then
\[ \frac{p-1}{c} \mid  [F:\Q]. \]

\begin{remark}
\label{REMARK10} \label{REMARK3.1}
For any finite set $\{p_1,\ldots,p_n\}$ of prime numbers, by
taking $c$ to be divisible by $\prod_{i=1}^n (p_i-1)$ we get
Condition (P2) for these primes automatically.  Thus Condition (P2)
holds if there is a positive integer $\tilde{c}$ and a number $P$
such that for all primes $p \geq P$ and all $A_{/F} \in \mathcal{F}$,
if $A(F)$ has a point of order $P$ then $p-1 \mid \tilde{c} [F:\Q]$.
%In some ways it is more natural to think in terms of $\tilde{c}$
%than in terms of $c$.  We will use both in what follows.
\end{remark}

\begin{thm}
\label{THM4}
\label{P1P2THM}
Let $\mathcal{F}$ be a family of abelian varieties over number fields.
Suppose $\mathcal{F}$ satisfies Conditions (P1) and (P2).  Then:
\begin{itemize}
\item[a)] The exponent of
the torsion subgroup is typically
bounded in $\mathcal{F}$. That is, for each $\epsilon > 0$, there is $B_{\epsilon} \in \Z^{+}$ such that the set
\[
\{ d \in \Z^+ \mid \exists \  A_{/F} \in \mathcal{F} \text{ such that }
[F:\Q] = d \text{ and } \exp{A(F)[\tors]} \geq B_{\epsilon} \}
\]
has upper density at most $\epsilon$.
\item[b)] For $g \in \Z^+$, let $\mathcal{F}_g$ be the elements of $\mathcal{F}$
of dimension $g$.  Then the torsion subgroup is typically bounded in $\mathcal{F}_g$.
\end{itemize}
\end{thm}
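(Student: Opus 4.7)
The plan is to reduce part (b) to part (a), and then prove (a) by splitting the condition $\exp A(F)[\tors] \geq B$ into two mutually exhaustive cases, to which (P1) and (P2) respectively apply. For part (b), an abelian variety of dimension $g$ has $A(F)[\tors]$ of $\Z$-rank at most $2g$, so $\# A(F)[\tors] \leq (\exp A(F)[\tors])^{2g}$; once (a) is in hand, part (b) follows immediately with new constant $B_{\epsilon}^{2g}$.

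For part (a), fix $\epsilon > 0$ and a threshold prime $P_0 = P_0(\epsilon)$ to be chosen. Given $A_{/F} \in \mathcal{F}$ with $[F:\Q] = d$ and $\exp A(F)[\tors] \geq B$, either (i) some prime $p > P_0$ divides the exponent, or (ii) the exponent is $P_0$-smooth, in which case the pigeonhole principle forces some $p \leq P_0$ to satisfy $v_p(\exp A(F)[\tors]) \geq (\log B)/(\pi(P_0)\log P_0)$. Case (ii) will be handled by (P1): for each $p \leq P_0$, pick $N_p$ so that $\sum_{p \leq P_0} p^{-N_p} < \epsilon/2$, then take $B_{\epsilon}$ so large that $(\log B_{\epsilon})/(\pi(P_0)\log P_0) \geq n(p, N_p)$ for every such $p$. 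By (P1), the case-(ii) bad $d$'s all lie in $\bigcup_{p \leq P_0}\{d : p^{N_p} \mid d\}$, a set of upper density at most $\epsilon/2$.

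In case (i), (P2) confines $d$ to
\[
S_{P_0} := \bigcup_{p > P_0 \text{ prime}} \{d \in \Z^+ : (p-1)/c \mid d\}.
\]
The hard part---and the principal obstacle---will be showing that $\overline{\delta}(S_{P_0})$ can be made arbitrarily small by taking $P_0$ large. The naive union bound $\sum_{p > P_0} c/(p-1)$ diverges and tells us nothing. The right approach is to reformulate membership in $S_{P_0}$ as the statement that $d$ has a divisor $k \geq (P_0-1)/c$ with $ck+1$ prime, decompose the range of $k$ dyadically, and on each dyadic block $[y, 2y]$ combine a Ford-type estimate on the density of integers having a divisor in $[y, 2y]$ with the factor $\sim 1/\log y$ gained from restricting to $k$ with $ck + 1$ prime (via the Prime Number Theorem in the appropriate arithmetic progression). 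The resulting dyadic sum then converges and should yield a bound of the form $\overline{\delta}(S_{P_0}) \ll (\log P_0)^{-\delta}$ for an explicit $\delta > 0$. Choosing $P_0$ so that this quantity is $\leq \epsilon/2$ and combining with case (ii) gives the desired $\overline{\delta}(\text{bad set}) \leq \epsilon$.
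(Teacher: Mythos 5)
Your high-level structure is exactly the paper's: reduce (b) to (a) via $\#A(F)[\tors] \mid (\exp A(F)[\tors])^{2g}$, then split (a) into ``some prime $> P_0$ divides the exponent'' (handled by (P2)) and ``the exponent is $P_0$-smooth with a high prime power'' (handled by (P1)). Both decompositions give density $\leq \epsilon$ once the two pieces are bounded, and your case (ii) argument is correct.

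The gap is in case (i), and you have correctly identified it as the principal obstacle. What you need is precisely the statement: for every $\epsilon > 0$ there is $C$ such that the set of $d$ with a divisor of the form $\ell - 1 > C$ for prime $\ell$ has upper density $\leq \epsilon$ (from which the version with $(\ell-1)/c$ follows easily for fixed $c$). This is not something to re-derive in passing: it is a theorem of Erd\H os and Wagstaff \cite[Thm.~2]{Erdos-Wagstaff80}, and the paper simply invokes it. Your sketch of a dyadic decomposition combining a Ford-type divisor-density estimate with a $1/\log y$ factor from the prime condition is headed in the right direction morally, but as stated it conflates two events that are not independent: the probability that $d$ has a divisor in $[y,2y]$, and the probability that such a divisor $k$ has $ck+1$ prime, cannot simply be multiplied. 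Making this rigorous requires a sieve-theoretic argument over the set of shifted primes and careful handling of the divisor distribution; this is exactly the content of the Erd\H os--Wagstaff paper. You should replace your sketch with a citation to that result, after which the rest of your argument goes through and coincides with the paper's proof.

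One small stylistic note: your pigeonhole in case (ii) extracts a lower bound on $v_p(\exp A(F)[\tors])$ for \emph{some} $p \leq P_0$, which is fine, but you still need to apply (P1) uniformly: choose $N_p$ first so that $\sum_{p \leq P_0} p^{-N_p} < \epsilon/2$, then apply (P1) at level $N_p$ for each $p$ to get $n(p,N_p)$, and finally take $B_\epsilon$ large enough that the pigeonhole forces $v_p \geq n(p,N_p)$ for whatever $p$ it selects. You have this in essence, just make sure the quantifier order is stated cleanly.
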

\begin{proof}
a) We will use the Erd\H os-Wagstaff Theorem \cite[Thm. 2]{Erdos-Wagstaff80}: for all $\epsilon > 0$,
there is $C_{\epsilon}$ such that the set of positive integers
admitting a divisor of the form $\ell - 1 > C_{\epsilon}$ for a prime
number $\ell$ has upper density at most $\epsilon$.  It follows easily
that for any fixed $c \in \Z^+$, there is $C = C(\epsilon,c)$ such that the
set of positive integers $d$ for which there exists a prime
$\ell > C$ such that $\frac{\ell -1}{c} \mid d$ has upper density at most $\epsilon$.
By Condition (P2),
there is $c \in \Z^+$ such that for all $d \in \Z^+$, if $F/\Q$
is a number field of degree $d$ and $A_{/F} \in \mathcal{F}$ with $\ell \mid \# A(F)[\tors]$, we have
\[ \frac{\ell - 1}{c} \mid  d. \]
Thus, after removing a set of degrees $d$ of upper density at most
$\epsilon/2$, we get $L \in \Z^+$ such that if $\ell \mid \# A(F)[\tors]$ for some $A_{/F} \in \mathcal{F}$ with $[F:\Q] = d$,
then $\ell \leq L$.  For each $\ell \leq L$ and $N \in \Z^+$, the
set of $d$ which are divisible by $\ell^N$ has density $\ell^{-N}$, so if
$N$ is sufficiently large then the set of positive integers $d$
which is divisible by $\ell^N$ for some $\ell \leq L$ has density
at most $\frac{\epsilon}{2}$.  By Condition (P1), there is a positive
integer $n$ such that if $\ell \leq L$ and $A_{/F} \in \mathcal{F}$
has a point of order $\ell^{n+1}$ then $\ell^N \mid [F:\Q]$.  This gives a
set $\mathcal{D}_{\epsilon} \subset \Z^+$ of upper density at most
$\epsilon$ such that if $A_{/F} \in \mathcal{F}$ and $[F:\Q] \notin
\mathcal{D}_{\epsilon}$, then the order of any torsion point on
$A(F)$ divides $\prod_{\ell \leq L} \ell^n$. So we may take $B_{\epsilon}=1+\prod_{\ell \leq L} \ell^n$.

\medskip\noindent
b) If $A_{/F}$ is an abelian variety defined over a number field,
then \[\# A(F)[\tors] \mid (\operatorname{exp} A(F)[\tors])^{2\dim A}. \]
The result follows immediately from this and from part a).
\end{proof}

%\begin{remark}
%It follows from Theorems \ref{P1P2THM} and \ref{NAJMANTHM} that the family of $\mathcal{A}(1)$ of all elliptic curves cannot satisfy both Conditions (P1) and (P2).  The results of the next section will show that $\mathcal{A}(1)$ satisfies \emph{neither} of the two.  This is also a consequence of \cite[Thm. 6.3]{BCS15}: for any prime $p$ and positive integer
%$N$, there is $E_{/F} \in \mathcal{A}(1)$ with $p \nmid [F:\Q]$ and such that
%$E(F)$ has a point of order $N$.
%\end{remark}

\subsection{Merelian families}
\noindent A family $\mathcal{F}$ of abelian varieties is
\emph{Merelian} if for all $d \in \Z^+$, there is $B(d) \in \Z^+$
such that for all number fields $F$ of degree $d$, if $A_{/F} \in \mathcal{F}$ then $\# A(F)[\tors] \leq B(d)$.

\begin{example}\mbox{ }
\begin{itemize}
\item[a)] (Merel \cite{Merel96}) The family $\mathcal{E} = \mathcal{A}(1)$ of all elliptic curves is Merelian.
\item[b)] It is believed that for all $g \geq 2$ the family $\mathcal{A}(g)$ of all $g$-dimensional abelian varieties is Merelian, but this seems to lie far out of reach.  \item[c)] (Silverberg \cite{Silverberg88}) Fix $g \geq 1$.  The family $\mathcal{A}_{\CM,g}$ of $g$-dimensional abelian varieties with complex multiplication is Merelian.
\item[d)] Work of Clark-Xarles \cite{CX08} produces Merelian families $\mathcal{F}_g$ with
\[ \mathcal{A}_{\CM,g} \subsetneq \mathcal{F}_g \subsetneq \mathcal{A}_g. \]
For instance,
one can take for $\mathcal{F}_g$ the family of all $g$-dimensional $A_{/F}$ such that $F$ admits a place $v_2 \mid 2$ and a place $v_3 \mid 3$ such that at each of the two places the N\'eron special fiber contains no copy of the
multiplicative group $\Gm$.
\end{itemize}
\end{example}

\begin{thm}
\label{6}
Let $\mathcal{F}$ be a family of abelian varieties over number fields that is closed under base extension, and let $\mathcal{F}_{g,d}$ be the subfamily of $\mathcal{F}_g$
consisting of all $g$-dimensional abelian varieties $A_{/F}$ such that there is a subfield
$F_0 \subset F$ with $[F_0:\Q] = d$ and an abelian variety $(A_0)_{/F_0}$
such that $(A_0)_{/F} \cong A_{/F}$.  If $\mathcal{F}_{g,d}$ is Merelian, then it satisfies Condition (P1).
\end{thm}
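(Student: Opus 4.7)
The plan is to build, inside $F$, a tower of intermediate subfields that interpolates between $F_0$ and $F_0(P)$ by successive $p$-divisions, and to convert the Merelian bound applied on each stage of the tower into a lower bound on the $p$-adic valuation of $[F:\Q]$.

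So fix a prime $p$ and a positive integer $N$, and take any $A/F \in \mathcal{F}_{g,d}$ such that $A(F)$ contains a point of order $p^n$ (with $n$ to be chosen below). By definition of $\mathcal{F}_{g,d}$ there is a subfield $F_0 \subseteq F$ with $[F_0:\Q] = d$ and $A_0/F_0$ with $(A_0)_F \cong A$; regard the given point as $P \in A_0(F)$. For $0 \leq k \leq n$ set $L_k := F_0(p^{n-k} P)$, so that $L_0 = F_0$, $L_n = F_0(P) \subseteq F$, and each extension $L_{k-1} \subseteq L_k$ adjoins a $p$-division of a point already defined over $L_{k-1}$. The fiber of multiplication by $p$ over any point of $A_0$ is a torsor for $A_0[p] \cong (\Z/p\Z)^{2g}$, so $[L_k:L_{k-1}]$ equals the Galois orbit of $p^{n-k}P$ inside such a torsor; in particular it divides $p^{2g}$ and is therefore a power of $p$, say $p^{e_k}$. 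Setting $e := \sum_{k=1}^n e_k$, we have $[L_n:F_0] = p^e$, and hence $v_p([F:\Q]) \geq e$.

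The main step, and the one that uses the hypothesis, is to force $e \geq N$ by choosing $n$ large. Here we exploit the Merelian bound, with bounding function $B(\cdot)$, taken monotone non-decreasing without loss of generality. Because $\mathcal{F}$ is closed under base change, each $(A_0)_{L_{k-1}}$ again lies in $\mathcal{F}_{g,d}$ (it descends to $F_0 \subseteq L_{k-1}$ of degree $d$), so
\[ |A_0(L_{k-1})[\tors]| \leq B([L_{k-1}:\Q]). \]
If $e_k = 0$ then $L_k = L_{k-1}$, so $p^{n-k}P$ — a point of exact order $p^k$ — already lies in $A_0(L_{k-1})$, and hence $p^k \leq B([L_{k-1}:\Q]) \leq B(d p^e)$. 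Since at most $e$ of the $e_k$'s can be nonzero, at least $n-e$ of them vanish, and so the largest index $k^*$ with $e_{k^*}=0$ satisfies $k^* \geq n-e$. Combining these two inequalities yields
\[ p^{n-e} \leq p^{k^*} \leq B(dp^e). \]

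It is now routine to close the argument. Suppose for contradiction that $e < N$, so $e \leq N-1$; then by monotonicity the displayed inequality forces $p^{n-N+1} \leq B(d p^{N-1})$, which fails as soon as $n > N - 1 + \log_p B(dp^{N-1})$. Defining $n(p,N)$ to be any integer strictly exceeding this threshold therefore delivers $e \geq N$, whence $p^N \mid [L_n:F_0]$ and consequently $p^N \mid [F:\Q]$, verifying Condition (P1) for $\mathcal{F}_{g,d}$. The only point requiring care is that the Merelian bound has to be applied to $(A_0)_{L_{k-1}}$ rather than to $A_0/F_0$, which is precisely where closure of $\mathcal{F}$ under base extension enters; the rest is combinatorics on the tower.
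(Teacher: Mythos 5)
There is a genuine gap. You assert that each step degree $[L_k:L_{k-1}]$ is a power of $p$ dividing $p^{2g}$, on the grounds that the Galois orbit of $p^{n-k}P$ lives inside a torsor for $A_0[p]$ of cardinality $p^{2g}$. But orbit sizes divide the order of the acting group, not the size of the ambient set, and the Galois action on that torsor is affine (involving both translation by $A_0[p]$ and the nontrivial $\bmod\,p$ representation), so the stabilizer index need not be a $p$-power. Concretely, at the very first step $L_1 = F_0(p^{n-1}P)$ is generated by a point of exact order $p$, and for $g=1$, $p=2$ this degree can be $3$ (take an elliptic curve over $\Q$ with irreducible $2$-division polynomial); $3 \nmid 2^{2g}$. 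Consequently $[L_n:F_0]$ need not be a $p$-power, the quantity $e_k := \log_p[L_k:L_{k-1}]$ is not even an integer, and the combinatorial step ``at most $e$ of the $e_k$ are nonzero, so some $L_{k^*}=L_{k^*-1}$ with $k^* \ge n-e$'' collapses, as does the bound $[L_{k^*-1}:\Q] \le dp^e$.

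What is missing is control of the prime-to-$p$ part of $[F_0(P):F_0]$. This is exactly what the paper's proof supplies, in one line: $[F_0(P):F_0]$ divides $[F_0(A[p^n]):F_0]$, which divides $\#\GL_{2g}(\Z/p^n\Z) = c\,p^G$ with $\gcd(c,p)=1$ and $c$ depending only on $p$ and $g$ (not on $n$). With this in hand, the Merelian hypothesis gives $[F_0(P):F_0] \ge c\,p^N$ once $n$ is large, and a divisor of $cp^G$ that is at least $cp^N$ must have $p$-adic valuation at least $N$. Your tower idea could in principle be repaired by noting that once $A_0[p] \subset A_0(L_{k-1})$ the remaining steps really are translations and hence $p$-powers, while the initial steps contribute a bounded prime-to-$p$ factor — but making that precise is essentially reproducing the paper's $\#\GL_{2g}(\Z/p^n\Z)$ bound, so the detour via the tower does not save work.
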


\begin{proof} Fix a prime number $p$ and a positive integer $N$.  If $A_{/F} \in \mathcal{F}_{g,d}$,
then $\dim A = g$ and
there is a subfield $F' \subset F$ with $[F':\Q] = d$ such that $A$
has a model over $F'$.  Let $P \in A(F)$ have order $p^n$ for some
$n \geq N$.  Then $F'(P) \subset F'(A[p^n])$, so
\[ [F'(P):F'] \mid [F'(A[p^n]):F'] \mid \# \GL_{2g}(\Z/p^n\Z).  \]
Moreover, for a certain integer $c$ with $\gcd(c,p)=1$, depending only on $p$ and $g$ (and not on $n$), we can write
\[ \# \GL_{2g}(\Z/p^n\Z)= cp^G \]
for some positive integer $G$.
%  (We could of course give $G$ and $c$ explicitly, but the explicit values
%are not needed for the argument.)
On the other hand, because $\mathcal{F}_{g,d}$ is Merelian, if $n$ is sufficiently large
compared to $N$ then
\[ [F'(P):F'] \geq c p^N . \]
It follows that
\[ p^N \mid [F'(P):F'] \mid [F:\Q]. \qedhere\]
%{\sc Step 2: }Apply Proposition \ref{4}a) to $\mathcal{F}_{g,d}$.
\end{proof}

\begin{thm}
\label{THM7}
\label{7}
Let $d_0 \in \Z^+$.  The family $\mathcal{E}_{d_0}$ of all elliptic
curves $E_{/F}$ defined over number fields such that $[\Q(j(E)):\Q] = d_0$
 satisfies Condition (P1).
\end{thm}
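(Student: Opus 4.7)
The plan is to deduce Theorem \ref{THM7} from Theorem \ref{6} by a twist argument. Take $\mathcal{F} = \mathcal{A}(1)$, the family of all elliptic curves over number fields, which is closed under base extension. Merel's theorem gives that $\mathcal{A}(1)$ is Merelian, hence so is its subfamily $\mathcal{F}_{1,d_0}$; Theorem \ref{6} then guarantees that $\mathcal{F}_{1,d_0}$ satisfies Condition (P1). I will show that any $E_{/F} \in \mathcal{E}_{d_0}$ becomes an element of $\mathcal{F}_{1,d_0}$ after a base change $L/F$ of degree bounded by an absolute constant, which lets us transfer Condition (P1) back to $\mathcal{E}_{d_0}$.

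Concretely, given $E_{/F} \in \mathcal{E}_{d_0}$ with $j(E) = j_0$, set $K := \Q(j_0) \subseteq F$, so $[K:\Q] = d_0$. Fix any elliptic curve $E_{j_0}$ defined over $K$ with $j$-invariant $j_0$. Then $E_{/F}$ is an $F$-twist of $(E_{j_0})_F$, and the twist class lies in $H^1(F, \Aut((E_{j_0})_{\overline{F}}))$. Since $\Aut((E_{j_0})_{\overline{F}})$ has order in $\{2,4,6\}$ (quadratic twists when $j_0 \neq 0, 1728$, quartic when $j_0 = 1728$, sextic when $j_0 = 0$), the defining cocycle $\Gal(\overline{F}/F) \to \Aut((E_{j_0})_{\overline{F}})$ has image of order at most $6$, so its kernel corresponds to an extension $L/F$ with $[L:F] \leq 6$ over which $E_L \cong (E_{j_0})_L$. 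Since $(E_{j_0})_L$ has the model $E_{j_0}$ over the degree-$d_0$ subfield $K \subseteq L$, we have $(E_{j_0})_L \in \mathcal{F}_{1,d_0}$.

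Now fix a prime $p$ and $N \in \Z^+$, and choose $n = n(p, N+1)$ from Condition (P1) applied to $\mathcal{F}_{1,d_0}$. If $E_{/F} \in \mathcal{E}_{d_0}$ has a point $P \in E(F)$ of order $p^n$, then the $L$-isomorphism $E_L \cong (E_{j_0})_L$ carries $P$ to a point of order $p^n$ in $E_{j_0}(L)$. Applying Condition (P1) for $\mathcal{F}_{1,d_0}$ to $(E_{j_0})_L$ yields $p^{N+1} \mid [L:\Q] \leq 6[F:\Q]$; for any prime $p$ this forces $p^N \mid [F:\Q]$, the extra factor of $p$ absorbing the factor of $6$ when $p \in \{2,3\}$. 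The only point warranting care is the twist bound $[L:F] \leq 6$, but this is a standard consequence of the classification of automorphisms of elliptic curves; once this is in hand, the reduction to Theorem \ref{6} is formal.
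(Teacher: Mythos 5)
Your overall strategy is exactly the paper's: reduce to the base-extension family via Theorem \ref{6} and Merel, then transfer Condition (P1) through a bounded-degree extension that trivializes the twist. The gap is in the quantitative accounting at the last step. You choose $n = n(p, N+1)$ and conclude from $p^{N+1} \mid [L:\Q] = [L:F][F:\Q]$ that $p^N \mid [F:\Q]$, asserting that the ``extra factor of $p$'' absorbs the contribution of $[L:F]$. But $[L:F]$ divides $\#\Aut(E_{\overline{F}}) \in \{2,4,6\}$, so $[L:F]$ can equal $4$ (when $j_0 = 1728$, a case the theorem includes since then $d_0=1$). In that event $\ord_2([L:F]) = 2$, and your inequality only gives $\ord_2([F:\Q]) \geq N-1$, which is not $p^N \mid [F:\Q]$. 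Phrasing the bound as $[L:F] \leq 6$ obscures this: the relevant quantity is $\max_p \ord_p([L:F])$, and for $p=2$ this is $2$, not $1$.

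The fix is what the paper does: take $n = n(p, N+2)$, and use that $[F':F]$ divides $12 = \lcm(2,4,6)$ (so $\ord_p([F':F]) \leq \ord_p(12) \leq 2$ for every $p$), which yields $\ord_p([F:\Q]) \geq (N+2) - 2 = N$. A secondary, cosmetic point: you speak of the ``kernel'' and ``image'' of the cocycle $\xi\colon \gg_F \to \Aut(E_{\overline{F}})$, but $\xi$ is not a homomorphism when the Galois action on $\Aut$ is nontrivial. The set $\{\sigma : \xi(\sigma)=1\}$ is still a subgroup of index at most $\#\Aut$, so the conclusion $[L:F] \leq \#\Aut$ survives, but the justification as stated is loose; the cleaner route is the explicit description of twists (for each $j$ the twist is trivialized over $F(u)$ with $u^n \in F^\times$, $n = \#\Aut$), which directly gives $[L:F] \mid n \mid 12$.
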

\begin{proof}
Combining Theorem \ref{6} with Merel's Theorem shows: the
family of elliptic curves arising by base extension from
a number field of degree $d_0$ satisfies Condition (P1).
Fix $p$ a prime number, $N$ a positive integer, and let $n \in \Z^+$
be such that for every elliptic curve $E_{/F}$ arising by base
extension from a number field of degree $d_0$ such that
$E(F)$ has a point of order $p^n$, we have $p^{N+2} \mid [F:\Q]$.  \\ \indent
Let $F$ be any number field, let $E_{/F}$ be an elliptic curve
with $[\Q(j(E)):\Q] = d_0$, and suppose that $E(F)$ has a point of
order $p^n$.  Let $F_0 = \Q(j(E))$, let $(E_0)_{/F_0}$ be
any elliptic curve.  Then $F_0 \subset F$, and there is
$F'/F$ of degree dividing $12$ such that $E_{/F'} \cong (E_0)_{/F'}$.
Then $E_0(F') \cong E(F') \supset E(F)$ has a point of order $p^n$,
so
\[ p^{N+2} \mid [F':\Q] = [F':F][F:\Q] \mid 12 [F:\Q], \]
and thus
\[ \ord_p [F:\Q] \geq N. \qedhere \]
\end{proof}

\section{The proofs of Theorems \ref{BIGCMTHM}, \ref{FZEROTHM} and \ref{DZEROTHM}}

\subsection{The proof of Theorem \ref{BIGCMTHM}}

\begin{thm}[Gaudron-R\'emond {\cite{GR17}}]
\label{GRTHM}
Let $A_{/F}$ be an abelian variety defined over a number field.  Suppose $A$ has complex multiplication and $\End_F A = \End A$.
Let $\mu$ be the number of roots of unity in the center of $\End A$.  Let $e$ be the exponent of $A(F)[\tors]$.  Then
\begin{equation}
\label{GAUDRONREMONDEQ}
 \varphi(e) \mid \frac{\mu}{2} [F:\Q].
\end{equation}
\end{thm}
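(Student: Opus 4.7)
The plan is to translate the hypothesis into a constraint on the CM Galois representation, then compare this representation with the cyclotomic character via the Weil pairing. Write $Z$ for the center of $\End A$, an order in a product of CM fields. Because $\End_F A = \End A$, the Galois action on the Tate module commutes with $\End A$, so the image of $\rho_\ell\colon \gg_F \to \GL(V_\ell A)$ lies in the commutant of $\End A \otimes \Q_\ell$. A standard dimension count (or Shimura--Taniyama theory) shows that $V_\ell A$ is free of rank one over $Z \otimes_{\Z} \Q_\ell$, so the commutant is precisely $(Z \otimes \Q_\ell)^{*}$. Hence $\rho_\ell$ refines to a character $\chi_\ell\colon \gg_F \to (Z \otimes \Q_\ell)^{*}$, and assembling over $\ell$ produces, for each $N$, a character $\bar{\chi}_N\colon \gg_F \to (Z/NZ)^{*}$ describing the Galois action on $A[N]$.

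Next I would invoke a polarization on $A$. The Weil pairing on $T_\ell A$ with values in $\Z_\ell(1)$ is compatible with the $Z$-action, which translates into the identity $N_{Z/\Q} \circ \chi_\ell = \chi_{\mathrm{cyc}}$. The cost of passing to a polarized isogenous model is a bounded factor that can be absorbed into the final constant.

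With these pieces in place, I would exploit the rational torsion. After choosing a $Z$-module identification $A[e] \cong Z/eZ$, every $P \in A(F)$ of order $e$ corresponds to some $x \in Z/eZ$ of additive order $e$, and the condition $\sigma(P) = P$ becomes $(\bar{\chi}_e(\sigma) - 1)\, x = 0$, so $\bar{\chi}_e(\sigma) \in 1 + \operatorname{Ann}(x)$. Applying $N_{Z/\Q}$ and combining with the cyclotomic identity bounds the image of $\bar\chi_{\mathrm{cyc},e}$, equivalently $[F(\mu_e):F]$. The subgroup of units of $Z$ that fix $x$ is exactly the group of roots of unity in $Z$, of order $\mu$, and the additional factor of $2$ arises because $[-1]$ maps trivially under $N_{Z/\Q}$. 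This yields $\varphi(e) \mid \frac{\mu}{2}[F:\Q]$.

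The main obstacle is the last step: pinning down the constant $\mu/2$ rather than a weaker bound like $\mu$. That requires careful tracking of the polarization, of the isogeny correction needed to arrive at an $A$ for which $A[e]$ is free over $Z/eZ$, and of what happens at primes $\ell$ where $\End A$ is non-maximal or divides the conductor of $Z$ in its normalization. This is where Silverberg's original analysis and the Gaudron--R\'emond refinement do the delicate accounting; a weaker version with constant depending only on $g$ and $Z$ should fall out easily, but optimizing down to $\mu/2$ is the bulk of the work.
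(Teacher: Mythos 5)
The paper does not prove this statement: it is displayed with the attribution ``Gaudron--R\'emond \cite{GR17}'' and is then used as imported input in the proof of Corollary \ref{GRCOR}. So there is no internal argument for your sketch to be measured against; to match the paper's level of detail here one would simply cite \cite{GR17}.

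Judged on its own, your outline does track the broad strategy of Silverberg and Gaudron--R\'emond: use $\End_F A = \End A$ to force the Galois image on the Tate module into the units of the center $Z$, compare the resulting character with the cyclotomic character through a polarization-compatible norm relation, and then let a rational torsion point of order $e$ impose a congruence constraint. But you acknowledge that the key step --- extracting exactly $\mu/2$ --- is left to the source, and a few of the steps as written need repair. The claim that $V_\ell A$ is free of rank one over $Z \otimes \Q_\ell$ requires $\dim_\Q Z^0 = 2g$; this is not automatic from the paper's definition of CM (for instance $A$ isogenous to $E^g$ has $\dim_\Q Z^0 = 2$ but $\dim V_\ell A = 2g$), even though the weaker and sufficient conclusion --- that the Galois image lands in $(Z \otimes \Q_\ell)^\times$ because it commutes with all of $\End A$ --- remains correct. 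The identity $N_{Z/\Q}\circ \chi_\ell = \chi_{\mathrm{cyc}}$ should be stated in terms of the reflex norm and the Rosati involution of a chosen polarization, with attention to primes dividing the conductor of $Z$ in its normalization, and the ``bounded isogeny correction'' cannot simply be absorbed into a constant if the goal is the sharp coefficient $\mu/2$. Finally, identifying the stabilizer in $Z^\times$ of a generator $x \in Z/eZ$ with the $\mu$ roots of unity conflates a congruence condition (units $\equiv 1 \bmod eZ$) with torsion; since orders in CM algebras typically have units of infinite order, reducing to roots of unity is itself a nontrivial step, and the extra factor of $2$ needs more than the remark that $-1$ has trivial norm. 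In short, this is a fair roadmap but not a proof, which matches your own assessment.
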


\begin{cor}
\label{GRCOR}
Let $g \in \Z^+$.  There is a positive integer $c(g)$ such that: for all number fields $F$ and all $g$-dimensional CM
abelian varieties $A_{/F}$, if $A(F)$ has a rational point of order $N$, then
\[ \varphi(N) \mid c(g) [F:\Q]. \]
\end{cor}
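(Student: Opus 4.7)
The plan is to reduce Corollary \ref{GRCOR} to Theorem \ref{GRTHM} by passing to a finite extension $F'/F$ over which all endomorphisms of $A$ become rational, and then to bound the two ``extra'' quantities, $[F':F]$ and $\mu$, uniformly in terms of $g$ -- converting size bounds into divisibilities so the final statement comes out in the required form.

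First I would set $F' \coloneqq F(\End A)$. By Silverberg's theorem on fields of definition of endomorphisms, $F' \subset F(A[3])$, so $[F':F]$ divides $|\GL_{2g}(\F_3)|$; call this bound $M_1(g)$. Now suppose $A(F)$ contains a point $P$ of order $N$. Then $P \in A(F')$, so $N$ divides $e' \coloneqq \exp A(F')[\tors]$, whence $\varphi(N) \mid \varphi(e')$. Because $\End_{F'}A = \End A$ by construction, Theorem \ref{GRTHM} applies to $A_{/F'}$ and yields
\[
\varphi(N) \;\bigm|\; \varphi(e') \;\bigm|\; \frac{\mu}{2}[F':\Q] \;=\; \frac{\mu}{2}[F':F]\,[F:\Q].
\]

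Next I would bound $\mu$ in terms of $g$. The center $Z^0$ of $\End^0 A$ is a commutative semisimple $\Q$-algebra; for a $g$-dimensional CM abelian variety, if $A$ is isogenous to $\prod A_i^{n_i}$ with each $A_i$ simple CM of dimension $g_i$, then $Z^0 \cong \prod K_i$ with $K_i$ a CM field of degree $2g_i$ and $\sum n_i g_i = g$. In particular, each root of unity in $Z^0$ lies in a number field of degree at most $2g$, so $\varphi(\mu) \leq 2g$, and only finitely many values of $\mu$ can occur as $A$ varies over $g$-dimensional CM abelian varieties. Let $M_2(g) \coloneqq \operatorname{lcm}\{\mu/2 : \mu \text{ occurs}\}$, an integer depending only on $g$; then $\mu/2 \mid M_2(g)$ in every case.

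Taking $c(g) \coloneqq M_1(g)\, M_2(g)$ gives $\frac{\mu}{2}[F':F] \mid c(g)$, and combining this with the displayed divisibility yields $\varphi(N) \mid c(g)[F:\Q]$, as required. The main obstacle is really the first step: without Silverberg's theorem one would have no a priori control on $[F(\End A):F]$, and converting ``$A$ has CM'' (a statement about $\End^0 A_{/F^{\sep}}$) into a usable hypothesis about $\End_{F'} A$ is the conceptual heart of the argument. The bound on $\mu$, by contrast, is elementary once the structure of the CM algebra is in hand.
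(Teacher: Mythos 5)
Your overall architecture matches the paper's: pass to an extension $F'$ over which all endomorphisms are defined, bound $[F':F]$ uniformly in $g$, apply Gaudron--R\'emond over $F'$, and then bound $\mu$ uniformly in $g$. One small but genuine divergence is in how you bound $[F':F]$: you invoke Silverberg's theorem that $\End A$ is defined over $F(A[3])$ and therefore get a divisibility $[F':F] \mid \#\GL_{2g}(\F_3)$, whereas the paper's main text instead observes that $\gg_F$ acts on $\End A \cong \Z^d$ ($d \leq 2g^2$) through a finite subgroup of $\GL_d(\Z)$ and quotes Minkowski. The paper's footnote explicitly notes that Silverberg's work gives sharper bounds, so your route is the one the authors flag as the better one; it also has the pleasant feature of immediately producing a divisibility rather than merely an upper bound.

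There is, however, a flaw in your bound on $\mu$. In Theorem \ref{GRTHM}, $\mu$ is the \emph{number} of roots of unity in the center $Z$ of $\End A$, and for a $g$-dimensional CM abelian variety the center $Z^0 \cong \prod_{i=1}^r K_i$ is a product of CM fields with $\sum_i [K_i:\Q] = 2g$. The group of roots of unity in this product is $\prod_i \mu(K_i)$, and it is \emph{not} cyclic in general, nor is its order $\mu = \prod_i \mu_i$ bounded by the relation $\varphi(\mu) \leq 2g$. For instance, with $g=2$ and $A = E_1 \times E_2$ where $E_1$ has CM by $\Z[i]$ and $E_2$ by $\Z[\omega]$, one has $\mu = 4 \cdot 6 = 24$ and $\varphi(24) = 8 > 2g$. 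Your intermediate assertion ``each root of unity in $Z^0$ lies in a number field of degree at most $2g$, so $\varphi(\mu) \leq 2g$'' therefore does not hold: a root of unity in the product is a tuple, and the inequality only controls each coordinate. The conclusion you draw from it -- that only finitely many values of $\mu$ can occur for fixed $g$ -- is nonetheless correct, and this is all you actually need: each $\mu_i$ satisfies $\varphi(\mu_i) \leq [K_i:\Q] \leq 2g$, so each $\mu_i$ is bounded in terms of $g$, and $r \leq g$ since each $[K_i:\Q] \geq 2$; hence $\mu \mid \prod_i \mu_i$ is bounded by a quantity depending only on $g$. This is precisely the argument the paper gives, so the fix is local and your proof goes through once the spurious claim $\varphi(\mu) \leq 2g$ is replaced by the product argument.
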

\begin{proof}
Step 1: There is $H(g) \in \Z^+$ such that for every $g$-dimensional abelian variety $A$ defined over a
field $F$ of characteristic $0$, there is a field extension $K/F$ with $[K:F] \mid H(g)$ such that $\End_K A_{/K} =
\End A$.  This can be proved by observing that $\End A \cong \Z^d$ for some $1 \leq d \leq 2g^2$ and thus the action of
$\gg_F$ on $\End A$ is given by a homomorphism $\gg_F \ra \GL_d (\Z)$ with finite image, and using a result of Minkowski \cite{Minkowski87} that  there is an absolute bound on the size of a finite subgroup of $\GL_d(\Z)$.\footnote{This does not lead to a very good bound.   Better bounds occur as a special case of work of Silverberg \cite{Silverberg92b}.  Recent
work of Guralnick-Kedlaya \cite{Guralnick-Kedlaya16} computes the optimal value of $H(g)$ for all $g$, and recent work of R\'emond \cite{Remond17}
computes the maximal value of $[K:F]$ for each $g$.}  We get a variant of Theorem \ref{GRTHM} in which the hypothesis
$\End_F A = \End A$ is dropped and the conclusion is $\varphi(e) \mid \frac{H(g) \mu}{2} [F:\Q]$.

\medskip \noindent
Step 2: We claim that there is $M(g) \in \Z^+$ such that if $A_{/F}$ is a $g$-dimensional CM abelian variety with
$\End_F A = \End A$ then the number of roots of unity in the center of $\End A$ divides $M(g)$.   Let $Z$ be the center of
$\End A$ and $Z^0$ be the center of $\End^0 A$.  Then $Z^0$ is an \'etale $\Q$-algebra of dimension $2g$
\cite[Prop. 1.3 and $\S$3]{Milne}, i.e., there are number fields $F_1,\ldots,F_r$ such that $Z^0 \cong \prod_{i=1}^r F_i$
and $2g = \dim Z^0 = \prod_{i=1}^r [F_i:\Q]$.  Thus $Z$ is isomorphic to a subring of $\prod_{i=1}^r \Z_{F_i}$,
where $\Z_{F_i}$ is the ring of integers of $F_i$, so if $\mu_i$ is the number of roots of unity in $F_i$ then
$\mu \mid \prod_{i=1}^r \mu_i$.  The number of roots of unity in a number field $F$ is bounded in terms of
$[F:\Q$] -- indeed, if this number is $N$, then $F \subset \Q(\zeta_N)$, so $\varphi(N) \mid [F:\Q]$ and $N = O([F:\Q] \log \log [F:\Q])$.
Moreover $r \leq 2g$ and $[F_i:\Q] \leq 2g$ for all $i$.  This establishes the claim, and the result follows.
\end{proof}
\noindent
Let $p$ be a prime number, let $N \in \Z^+$, and let $A_{/F}$ be a $g$-dimensional CM abelian variety defined over a number field.
Using Corollary \ref{GRCOR}, we get:
\begin{itemize} \item If $N \in \Z^+$, if $A(F)$ has a point of order $p^{N+\ord_p(c(g))+1}$ then
\[ \varphi(p^{N+\ord_p(c(g))+1}) = (p-1) p^{N+\ord_p(c(g))} \mid c(g) [F:\Q], \]
so $p^N \mid [F:\Q]$.  So Condition (P1) holds for the family $\mathcal{A}_{\CM}(g)$.
\item If $A(F)$ has a point of order $p$ then $p-1 = \varphi(p) \mid c(g)[F:\Q]$, establishing Condition (P2) for the family $\mathcal{A}_{\CM}(g)$.
\end{itemize}
By Theorem \ref{P1P2THM}, torsion is typically bounded on $\mathcal{A}_{\CM}(g)$.

\subsection{The proofs of Theorems \ref{FZEROTHM} and \ref{DZEROTHM}}
\noindent Theorem \ref{THM7} reduces the proofs of Theorems \ref{FZEROTHM} and \ref{DZEROTHM} to the
following result.

\begin{thm}\mbox{ }
\label{LASTTHM}
\begin{itemize}
\item[a)] Let $d_0 \in \Z^+$ be such that $\operatorname{SI}(d_0)$ holds.  The family $\mathcal{E}_{d_0}$
of all elliptic curves defined over number fields such that $[\Q(j(E)):\Q] = d_0$ satisfies condition (P2).
\item[b)] Let $F_0$ be a number field that does not contain the Hilbert class field of any imaginary quadratic field.  If $[F_0:\Q] \geq 3$ then
we assume the Generalized Riemann Hypothesis (GRH).  Then the family $\mathcal{E}_{F_0}$ of all elliptic curves defined over a number field $F \supset F_0$ such that $j(E) \in F_0$ satisfies condition (P2).
\end{itemize}
\end{thm}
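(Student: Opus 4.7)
The plan is to verify Condition (P2) for both families via a case analysis on the image of the mod-$p$ Galois representation. The CM case is handled by Corollary \ref{GRCOR} with $g=1$, and by Remark \ref{REMARK3.1} it suffices to treat primes $p$ exceeding an absolute bound depending on the family. Fix a non-CM $E_{/F}$ in the family and a point $P \in E(F)$ of order $p$ with $p$ large. Set $F_0 := \Q(j(E))$ and apply the twist reduction from the proof of Theorem \ref{THM7} to replace $F$ by an extension of degree dividing $12$ and assume $E$ is defined over $F_0$; this only inflates the final constant by a factor of $12$.

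The first key step is to exclude $F_0$-rational $p$-isogenies on $E$ for $p$ large. In part (a), $\operatorname{SI}(d_0)$ gives this directly: such an isogeny through $C \subset E[p]$ would present $(E,C)$ as a non-CM, noncuspidal closed point of $X_0(p)$ of degree $d_0=[F_0:\Q]$, contradicting $\operatorname{SI}(d_0)$ for $p > \ell_0(d_0)$. In part (b), the Larson--Vaintrob theorem (used under GRH when $[F_0:\Q]\geq 3$) yields the same conclusion under the hypothesis that $F_0$ contains no imaginary-quadratic Hilbert class field. Hence the mod-$p$ representation $\rho\colon \gg_{F_0}\to\GL_2(\F_p)$ is irreducible.

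By Dickson's classification, for $p$ larger than an absolute bound, the image $G := \rho(\gg_{F_0})$ either contains $\SL_2(\F_p)$ or lies in the normalizer of a Cartan subgroup (split $N_s$ or nonsplit $N_{ns}$); exceptional projective images $A_4$, $S_4$, $A_5$ disappear for large $p$. The $\gg_{F_0}$-orbit of $P$ has size $[F_0(P):F_0]$, which divides $[F:F_0]$. The key arithmetic input in each case is that $\det\rho = \chi_{\mathrm{cyc}}$ has image in $\F_p^*$ of order $m_p \geq (p-1)/d_0$. In the $\SL_2$-case, the orbit is $p^2-1$. In the nonsplit case, $C_{ns}$ acts freely on nonzero vectors, so the orbit size is at least $|G\cap C_{ns}|$, which is divisible by $|\det(G\cap C_{ns})|\geq m_p/2$; this yields $(p-1)/2 \mid [F:\Q]$. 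For the split case, pass to the quadratic subfield $F_0'/F_0$ fixing $G\cap D$ (where $D$ is the diagonal torus); the isogeny characters $\lambda_1,\lambda_2\colon \gg_{F_0'}\to\F_p^*$ satisfy $\lambda_1\lambda_2 = \chi_{\mathrm{cyc}}|_{F_0'}$ and $\lambda_2 = \lambda_1^\sigma$ for $\sigma$ generating $\Gal(F_0'/F_0)$, so $\mathrm{im}(\lambda_1\lambda_1^\sigma)\subseteq \mathrm{im}(\lambda_1)$ forces $|\mathrm{im}(\lambda_1)|\geq m_p'\geq (p-1)/(2d_0)$, and the $\gg_{F_0}$-orbit of $P$ has size $2|\mathrm{im}(\lambda_1)|$.

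Tracking divisibility through the tower $F_0 \subseteq F_0' \subseteq F$ yields $(p-1)/c \mid [F:\Q]$ for a constant $c$ depending only on $d_0$ (part (a)) or on $[F_0:\Q]$ (part (b)); explicitly, $c = 12\cdot \lcm(1,2,\ldots,2d_0)$ suffices. This verifies Condition (P2). The main technical obstacle is the split Cartan case, in which the natural isogeny character $\lambda_1$ lives over $F_0'$ rather than $F_0$; the compensating input is the Galois-conjugate relation $\lambda_2 = \lambda_1^\sigma$, and the final constant $c$ must absorb divisors of the index $[F_0':\Q] = 2d_0$.
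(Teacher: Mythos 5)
Your proposal follows the same broad architecture as the paper's proof — reduce to non-CM curves via the CM case, twist-reduce to a model over $F_0 = \Q(j(E))$, and run Dickson's classification of the image of $\rho_p$ — but differs in the split Cartan case, where you substitute an isogeny-character argument (using $\lambda_1\lambda_1^\sigma = \chi_p|_{F_0'}$ and the subgroup containment $\mathrm{im}(\lambda_1\lambda_1^\sigma)\subseteq\mathrm{im}(\lambda_1)$) for the paper's appeal to Lozano-Robledo's explicit classification \cite[Lemma 6.6]{LR13} of stabilizers in $NC_s$. That substitution is a genuine variation and is attractive, though it forces you to absorb the ambient ``$\geq$'' in $|\mathrm{im}(\lambda_1)|\geq (p-1)/(2d_0)$ into the final constant via $\lcm(1,\ldots,2d_0)$, whereas the paper's stabilizer lemma yields clean divisibilities and the sharper constant $c=2$.

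There are, however, two genuine gaps. First, your assertion that the exceptional projective images $A_4$, $S_4$, $A_5$ ``disappear for large $p$'' is not a group-theoretic fact — these subgroups exist in $\PGL_2(\F_p)$ for all $p\geq 5$. What you actually need is the arithmetic statement (Etropolski's Prop.~2.6, building on Serre and Mazur, cited in the paper) that if the mod-$p$ projective image of an elliptic curve over a degree-$d_0$ number field is one of $A_4,S_4,A_5$, then $p\leq 15d_0+1$; the bound depends on $d_0$, not absolutely, and it is not free. Second, in part (b) you invoke only Larson--Vaintrob to rule out $F_0$-rational $p$-isogenies for large $p$, but that result is conditional on GRH, while the theorem is asserted unconditionally when $[F_0:\Q]\leq 2$. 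For $[F_0:\Q]=2$ (with $F_0$ not an imaginary quadratic field of class number one) you need Momose's unconditional theorem \cite{Momose95}, and for $F_0=\Q$ you need Mazur \cite{Mazur78} (equivalently $\operatorname{SI}(1)$); as written, your argument silently assumes GRH in these degrees.

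A smaller omission: your split-case orbit computation (``the $\gg_{F_0}$-orbit of $P$ has size $2|\mathrm{im}(\lambda_1)|$'') implicitly assumes $P$ lies on one of the two eigenlines $L_1,L_2$. When $P$ is not on an eigenline — case (iii) of Lozano-Robledo's lemma — the stabilizer in $NC_s$ has order at most $2$ and the orbit is generally larger; the argument can be patched by projecting the $(G\cap D)$-orbit to the first coordinate to extract a factor of $|\mathrm{im}(\lambda_1)|$, but this case must be addressed, since nothing in the Galois-theoretic setup forces $P$ onto an eigenline.
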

\noindent
We begin the proof of Theorem \ref{LASTTHM}.  In view of the work of \cite{BCP17} (or Theorem \ref{BIGCMTHM}) we may restrict attention to elliptic curves \emph{without} CM.  We will prove part a) and then discuss the (minor) modifications
necessary to prove part b).
\\ \\
Let $d_0 \in \Z^+$ be such that $\operatorname{SI}(d_0)$ holds.  For $E_{/F} \in \mathcal{E}_{d_0}$, let $F_0 = \Q(j(E))$ and let $(E_0)_{/F_0}$ be an elliptic curve with $j(E_0) = j(E)$, so $F_0 \subset F$.  Since $E$ has no CM, there is a quadratic extension $F'/F$ such that
$E_{/F'} \cong (E_0)_{/F'}$.   Since $[F':\Q] = 2[F:\Q]$, if the family $\mathcal{G}_{d_0}$ of all elliptic curves $E_{/F}$ that
arise by base extension from $(E_0)_{/\Q(j(E_0))}$ with $[\Q(j(E_0)):\Q] = d_0$ satisfies condition (P2) for some $c \in \Z^+$, then the family $\mathcal{E}_{d_0}$ satisfies (P2) for $2c$.  So we may work with  $\mathcal{G}_{d_0}$.
\\ \indent
Let $F_0$ be a number field of degree $d_0$, and let $(E_0)_{/F_0}$ be an elliptic curve with $\Q(j(E)) = F_0$.   Let $p$ be a prime number,  let
\[ \rho_p\colon \gg_{F_0} \ra \GL(E_0[p]) \]
be the mod $p$ Galois representation attached to $(E_0)_{/F_0}$, let $G = \rho_p(\gg_{F_0})$ be its image, and let
$\overline{G}$ be its projective image, i.e., the image of $G$ under the homomorphism $\GL(E_0[p]) \ra \PGL(E_0[p])$.  The degrees of extensions $F/F_0$ such that $E_0(F)$ has a point of order $p$ are the multiples of the sizes of the orbits of $G$
on $E_0[p] \setminus \{0\}$.  Thus it is enough to show that there is $c = c(d_0)$ such that for all $(E_0)_{/F_0}$ as above and for all nonzero $P \in E_0[p]$, we have
\[ \frac{p-1}{c} \divides [F_0(P):\Q]. \]
Recall that $\det \rho_p\colon \gg_{F_0} \ra (\Z/p\Z)^{\times}$ is the mod $p$ cyclotomic character $\chi_p$.
Let \[i(p) = [(\Z/p\Z)^{\times}:\chi_p(\gg_{F_0})]. \]  Then $i(p)$ depends on $F_0$ and not just $d_0$, but harmlessly: since $\chi_p(\gg_{\Q}) = (\Z/p\Z)^{\times}$,
we have $i(p) \mid d_0$.  In particular, we have in all cases that
\[ \frac{p-1}{d_0} \divides \frac{p-1}{i(p)} \divides \# G. \]
\\
A choice of $\F_p$-basis $e_1,e_2$ for $E_0[p]$ induces an isomorphism $\GL(E_0[p]) \stackrel{\sim}{\ra} \GL_2(\F_p)$, thus
$G$ may be viewed as a subgroup of $\GL_2(\F_p)$, well-defined up to conjugacy.  We recall the classification of subgroups of $\GL_2(\F_p)$ -- essentially due to Dickson, and given in \cite[\S2.4, \S2.6]{Serre72}: for $G \subset \GL_2(\F_p)$, at least one of the following holds:
\begin{itemize}
\item[(1)] The subgroup $G$ contains $\SL(E_0[p])$.
\item[(2)] The subgroup $G$ is contained in the normalizer of a split Cartan.
\item[(3)] The subgroup $G$ is contained in the normalizer of a nonsplit Cartan.
\item[(4)] The subgroup $\overline{G}$ is isomorphic to $A_4$, $S_4$ or $A_5$.
\item[(5)] The subgroup $G$ is contained in a Borel.
\end{itemize}
We will address each of these cases separately.
% We will only use our assumption $\operatorname{SI}(d_0)$ in Case 5
%(and indeed this assumption immediately eliminates Case 5).  This will complete the proof of part b).
%Then we will explain how to get a constant $c$ depending on $\Q(j(E))$ under the additional hypotheses of part a).
\subsubsection{Case 1} Suppose $G = \rho_p(\gg_{F_0})$ contains $\SL_2(E_0[p])$.  Then \[ [\GL_2(E_0[p]):G] \mid i(p) \mid d_0.\]
Since $\GL(E_0[p])$ acts transitively on $E_0[p] \setminus \{0\}$, the size of any orbit of $G$ on $E_0[p] \setminus \{0\}$ is
divisible by \[ \frac{\# (E_0[p] \setminus \{0\})}{d_0} = \frac{(p+1)(p-1)}{d_0}.\]  Thus $\frac{p-1}{d_0} \mid [F_0(P):F_0]$,
so $p-1 \mid [F_0(P):\Q]$: we may take $c = 1$.
\subsubsection{Case 2} A split Cartan subgroup $C_s$ of $\GL_2(E_0[p])$ is the set of matrices $m \in \GL_2(\F_p)$ that pointwise
fix each of a pair $L_1,L_2$ of one-dimensional subspaces of $E_0[p]$.  Its normalizer $NC_s$ is the set of matrices $m \in \GL_2(\F_p)$
that stabilize the pair $\{L_1,\L_2\}$, i.e., either $m L_1 = L_1$ and $m L_2 = L_2$ (so $m \in C_s$) or $mL_1 = L_2$ and $m L_2 =
L_1$ (so $m \notin C_s$).  Choosing a basis such that $e_1$ generates $L_1$
and $e_2$ generates $L_2$, these groups are given explicitly as
\[ C_s = \left\{ \left( \begin{array}{cc} a & 0 \\ 0 & b \end{array} \right) \bigg{\vert} \ a,b \in \F_p^{\times} \right\}, \]
\[ N C_s = C_s \coprod \left\{ \left( \begin{array}{cc} 0 & c \\ d & 0 \end{array} \right) \bigg{\vert} \ c,d \in \F_p^{\times} \right\}. \]
In particular, if $G$ is contained in a split Cartan, then it is contained a Borel subgroup, namely
the set of matrices $m \in \GL_2(E_0[p])$ that fix a single line $L$.  This is taken care of in Case 5 below.  So suppose
that $G$ is contained in $NC_s$ but not contained in $C_s$.  Let $F \supset F_0$ be a number field such that $E_0(F)$ has a point $P$ of order $p$.
Then $\left.\rho_p\right|_{\gg_F}$ has image contained in $NC_s$ and also fixes $P$ and thus the one-dimensional subspace $L = \langle P \rangle$, so the following result applies with $H = \rho_p(\gg_F)$.

\begin{lemma}[{Lozano-Robledo \cite[Lemma 6.6]{LR13}}]
\label{LRLEMMA6.6}
Let $H$ be a nontrivial subgroup of $NC_s$ that fixes each element of a one-dimensional subspace $L$ of $E_0[p]$.  Then
one of the following holds:
\begin{itemize}
\item[(i)] $H$ is contained in the subgroup $\left\{ \left( \begin{array}{cc} 1 & 0 \\ 0 & b \end{array} \right) \bigg{\vert} \ b \in \F_p^{\times}
\right\}$ and $V = L_1$.
\item[(ii)] $H$ is contained in the subgroup $\left\{ \left( \begin{array}{cc} a & 0 \\ 0 & 1 \end{array} \right) \bigg{\vert} \ a \in \F_p^{\times}
\right\}$ and $V = L_2$.
\item[(iii)] $H = \left\{ \left( \begin{array}{cc} 1 & 0 \\ 0 & 1 \end{array} \right), \left( \begin{array}{cc} 0 & c \\ c^{-1} & 0 \end{array}
\right) \bigg{\vert} \ \text{ for some $c$ in } \F_p^{\times} \right\}$ and $V = \langle (c,1) \rangle$.
\end{itemize}
\end{lemma}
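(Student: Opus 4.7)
The plan is to decompose $H$ along the coset partition $NC_s = C_s \sqcup (NC_s \setminus C_s)$, setting $H_0 \coloneqq H \cap C_s$ and $H_1 \coloneqq H \setminus H_0$, and then to read off the possibilities for the pointwise-fixed line $L$ from the explicit coordinate descriptions of diagonal and anti-diagonal matrices already given in the Case~2 setup.

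First I would handle the easier subcase $H_1 = \varnothing$. A diagonal matrix $\begin{pmatrix} a & 0 \\ 0 & b \end{pmatrix}$ pointwise fixes a nonzero $(x,y) \in \F_p^2$ iff $(a-1)x = 0 = (b-1)y$. A nonidentity diagonal matrix therefore pointwise fixes only $L_1 = \langle e_1\rangle$ (when $a = 1$, $b \neq 1$) or $L_2 = \langle e_2\rangle$ (when $b = 1$, $a \neq 1$). Hence when $H \subset C_s$, the hypothesis that some nontrivial element of $H$ pointwise fixes $L$ forces $L \in \{L_1, L_2\}$; moreover, every element of $H$ must then pointwise fix that same $L$, so $H$ sits inside one of the two subgroups appearing in (i) or (ii).

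The substantive subcase is $H_1 \neq \varnothing$. A direct calculation shows that $\begin{pmatrix} 0 & c \\ d & 0 \end{pmatrix}$ has a nonzero pointwise-fixed vector iff $cd = 1$, in which case the pointwise-fixed locus is exactly the line $\langle (c,1)\rangle$. Thus any choice of $\sigma \in H_1$ determines $\sigma = \begin{pmatrix} 0 & c \\ c^{-1} & 0 \end{pmatrix}$ and $L = \langle (c,1)\rangle$. Since $c \neq 0$, this $L$ is neither $L_1$ nor $L_2$, so by the diagonal analysis of the previous paragraph every element of $H_0$ must be the identity. Likewise, any further element $\sigma' \in H_1$ has its associated fixed line equal to the same $L$, forcing its parameter to equal $c$ and hence $\sigma' = \sigma$. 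We conclude $H = \{I, \sigma\}$, which is genuinely a subgroup since $\sigma^2 = I$, exactly matching case (iii).

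The entire argument reduces to two short linear-algebra calculations together with the coset split $NC_s = C_s \sqcup (NC_s \setminus C_s)$. The only real pitfall is remembering that the phrase ``\emph{$H$ fixes $L$}'' means pointwise fixation rather than merely stabilizing the line as a set; but this is built into the hypothesis, and once the diagonal/anti-diagonal dichotomy is in place the bookkeeping is entirely transparent. I therefore do not expect any serious obstacle.
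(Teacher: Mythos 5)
The paper does not give its own proof of this lemma: it is quoted verbatim (with notation $V$ for the line rather than $L$) from Lozano-Robledo \cite[Lemma 6.6]{LR13}, so there is no argument in the paper itself against which to compare yours. Judged on its own, your proof is correct. The two linear-algebra computations are right: a nonidentity diagonal $\left(\begin{smallmatrix} a & 0 \\ 0 & b \end{smallmatrix}\right)$ pointwise fixes a nonzero vector precisely when exactly one of $a,b$ equals $1$, with fixed line $L_1$ or $L_2$ accordingly; and $\left(\begin{smallmatrix} 0 & c \\ d & 0 \end{smallmatrix}\right)$ has a nonzero fixed vector iff $cd=1$, with fixed line $\langle (c,1)\rangle$. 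Splitting $H$ along $NC_s = C_s \sqcup (NC_s\setminus C_s)$ and noting that $\langle (c,1)\rangle \notin \{L_1,L_2\}$ then forces $H_0$ trivial when $H_1\ne\varnothing$, and the pointwise-fixing condition pins down the unique anti-diagonal element, giving exactly cases (i)--(iii). The only stylistic caveat worth recording is that $H_1$ is a coset, not a subgroup, so you want to phrase the uniqueness argument (as you in fact do) as pinning down any element of $H_1$ by the condition that it pointwise fix the already-determined line $L$, rather than by any group-theoretic property of $H_1$ itself.
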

\noindent
Suppose we are in Case (i) of Lemma \ref{LRLEMMA6.6}.  Let $g$ be a generator of the (unique, cyclic) subgroup of $\F_p^{\times}$
of order $\frac{p-1}{i(p)}$, and let $M_g \in G$ be such that $\det(M_g) = g$.

First suppose $M_g \in C_s$, say $M_g = \left( \begin{array}{cc} a & 0 \\ 0 & b \end{array} \right)$.  Since $G$ is not contained
in $C_s$, there is also a matrix $A \coloneqq \left( \begin{array}{cc} 0 & c \\ d & 0 \end{array} \right) \in G$, and so
$$ \begin{pmatrix}
0 & d^{-1} \\
c^{-1} & 0
\end{pmatrix}\begin{pmatrix}
a & 0 \\
0 & b
\end{pmatrix}\begin{pmatrix}
0 & c \\
d & 0
\end{pmatrix}=
\begin{pmatrix}
b & 0 \\
0 & a
\end{pmatrix} \in G$$
and thus also
$$\begin{pmatrix}
a & 0 \\
0 & b
\end{pmatrix}\begin{pmatrix}
b & 0 \\
0 & a
\end{pmatrix} =
\begin{pmatrix}
ab & 0 \\
0 & ab
\end{pmatrix} =\begin{pmatrix}
g & 0 \\
0 & g
\end{pmatrix} \in G.$$
\noindent
Let $Z \coloneqq \bigg{\langle} \left( \begin{array}{cc} g & 0 \\ 0 & g \end{array} \right) \bigg{\rangle}$.
Then $Z \subset G$ and $Z \cap H = \{e\}$, so
\[ \frac{p-1}{d_0} \divides  \frac{p-1}{i(p)}  \divides [G:H] = [F_0(P):F_0], \]
so
\[ p-1 \mid [F_0(P):\Q]. \]
\indent
Now suppose $M_g \in N C_s \setminus C_s$, say $M_g = \left( \begin{array}{cc} 0 & c \\ d & 0 \end{array} \right)$.
Then $\det M_g = -cd$, so $cd = -g$.  Also $M_g^2 = \left( \begin{array}{cc} -g & 0 \\ 0 & -g \end{array} \right) \in G$.
Let $W \coloneqq \bigg{\langle} \left( \begin{array}{cc} -g & 0 \\ 0 & -g \end{array} \right) \bigg{\rangle}$.  Then
$\frac{p-1}{2 i(p)} \mid \# W$ and $W \cap H = \{e\}$, so reasoning as above we get
\[ \frac{p-1}{2} \mid [F_0(P):\Q]. \]

The analysis for Case (ii) of Lemma \ref{LRLEMMA6.6} is exactly as for Case (i).

Suppose we are in case (iii) of Lemma \ref{LRLEMMA6.6}.  Then $\# \rho_p(\gg_F) \mid 2$.  As above, $d_0$ times the order of $\det \rho_p(\gg_{F_0})$ is divisible by $p-1$, so
\[ p-1 \mid d_0 [G:H] \mid 2 [F_0(P):\Q]. \]

Conclusion: in Case 2, we may take $c = 2$.
%so we may take $c = 2 d_0$.

\subsubsection{Case 3}
A nonsplit Cartan subgroup $C_{\operatorname{ns}}$ of $\GL(E_0[p])$ is obtained by endowing $E_0[p]$ with the structure
of a $1$-dimensional vector space over $\F_{p^2}$ and taking the group of $\F_{p^2}$-linear automorphisms.  It is known
that all such groups are conjugate in $\GL(E_0[p])$ and that the normalizer of $C_{\operatorname{ns}}$ consists
of all $\F_{p^2}$ \emph{semi}linear automorphisms of $\GL(E_0[p])$ and has order $2(p^2-1)$.
But we have already recalled more than we need, in view of the following result (\cite[Lemma 7.4]{LR13}).

\begin{lemma}
\label{H-nonsplit}
Let $N C_{\operatorname{ns}}$ be the normalizer of a nonsplit Cartan subgroup of $\GL(E_0[p])$, and let $H \subset N C_{\operatorname{ns}}$
be a subgroup that fixes each element in a one-dimensional $\F_p$-subspace of $E_0[p]$.  Then $\# H \leq 2$.
\end{lemma}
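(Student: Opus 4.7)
The plan is to compute the stabilizer of a nonzero vector inside $NC_{\operatorname{ns}}$ by exploiting the explicit description of the normalizer in terms of semilinear automorphisms. Identify $E_0[p]$ with $\F_{p^2}$ viewed as a $1$-dimensional $\F_{p^2}$-vector space (so every nonzero element of $E_0[p]$ corresponds to some $v \in \F_{p^2}^\times$). Then $C_{\operatorname{ns}}$ consists of the multiplications $m_\alpha \colon w \mapsto \alpha w$ for $\alpha \in \F_{p^2}^\times$, and $NC_{\operatorname{ns}} \setminus C_{\operatorname{ns}}$ consists of the semilinear maps $s_\alpha \colon w \mapsto \alpha w^p$ for $\alpha \in \F_{p^2}^\times$, where $w \mapsto w^p$ is the nontrivial element of $\Gal(\F_{p^2}/\F_p)$.

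Since an $\F_p$-line $L \subset E_0[p]$ is of the form $\F_p \cdot v$ for some $v \in \F_{p^2}^\times$, fixing every element of $L$ is equivalent to fixing $v$ itself (any element of $NC_{\operatorname{ns}}$ is $\F_p$-linear, so if it fixes $v$ it fixes all $\F_p$-multiples of $v$). First I would check that only the identity of $C_{\operatorname{ns}}$ fixes $v$: $m_\alpha(v) = v$ forces $\alpha = 1$. Next I would observe that at most one element of $NC_{\operatorname{ns}} \setminus C_{\operatorname{ns}}$ fixes $v$: $s_\alpha(v) = v$ forces $\alpha = v^{1-p}$, so the semilinear stabilizer is either empty or a singleton $\{s_{v^{1-p}}\}$.

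Putting these together, the stabilizer of $v$ inside $NC_{\operatorname{ns}}$ has order at most $2$, and therefore so does $H$, since $H$ is a subgroup of this stabilizer.

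No serious obstacle is anticipated; the one technical point worth verifying carefully is that the candidate semilinear element $s_{v^{1-p}}$ really does lie in $NC_{\operatorname{ns}}$ and is an involution, which follows from a short computation using $v^{p^2-1}=1$. Once the dictionary between $NC_{\operatorname{ns}}$ and $\F_{p^2}$-(semi)linear maps is set up, the rest of the argument is a two-line verification.
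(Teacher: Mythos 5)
Your argument is correct. One small remark: the semilinear stabilizer is never empty --- for any nonzero $v\in\F_{p^2}$ the element $s_{v^{1-p}}$ always fixes $v$, and it is always an involution since $\bigl(v^{1-p}\bigr)^{p+1}=v^{1-p^2}=1$; so the full stabilizer of $v$ in $NC_{\operatorname{ns}}$ has order exactly $2$, and $\#H\le 2$ follows. The paper does not actually supply a proof of this lemma: it cites Lozano-Robledo (\cite[Lemma 7.4]{LR13}) after briefly recalling the semilinear description of $NC_{\operatorname{ns}}$, which is exactly the dictionary you use, so your argument is what that citation unpacks to. A self-contained computation like yours is a fine substitute, and it has the modest advantage of avoiding an external reference for what is ultimately a two-line stabilizer calculation.
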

\noindent
Therefore for a nonzero $P \in E_0[P]$, we have
\[ \frac{p-1}{2 i(p)} \mid \frac{p-1}{2 d_0} \mid [G:H] = [F_0(P):F_0], \]
so $\frac{p-1}{2} \mid [F_0(P):\Q]$ and we may take $c = 2$.

%\begin{proof}
%We need to consider matrices of both forms in $\Nnonsplitcartan$.
%A matrix of the form $\begin{pmatrix}
%a & \epsilon b\\
%b & a
%\end{pmatrix} \in \Nnonsplitcartan$ has eigenvalues $\displaystyle a\pm b\sqrt{\epsilon} \in \overline{\ffield}$. For this matrix to fix a non-trivial vector of $\ffield^2$, $a=1$ and $b=0$, i.e. the it must be the identity matrix $\begin{pmatrix}
%1 & 0 \\
%0 & 1
%\end{pmatrix}.$\\
%A matrix of the form $\begin{pmatrix}
%c & \epsilon d\\
%-d & -c
%\end{pmatrix}$ has a eigenvalues $\pm \lambda$ of the form $\lambda^2 = c^2 - \epsilon d^2$. One of the eigenvalues being $1$ means that $c^2-\epsilon d^2 = 1$. The corresponding eigenvectors to such a matrix are multiples of $(-\epsilon d, c-1)$ if $c\neq 1$, else it is a multiple of $(1,0)$ with $c=1$ and $d=0$. Suppose you have two matrices $$\begin{pmatrix}
%c & \epsilon d\\
%-d & -c
%\end{pmatrix} \text{ and } \begin{pmatrix}
%c' & \epsilon d'\\
%-d' & -c'
%\end{pmatrix}$$ with the same with the same eigenvector and eigenvalue 1.  This is true if and only if $(-\epsilon d, c-1)$ is in he kernel of $$\begin{pmatrix}
%c & \epsilon d\\
%-d & -c
%\end{pmatrix} - \begin{pmatrix}
%c' & \epsilon d'\\
%-d' & -c'
%\end{pmatrix} = \begin{pmatrix}
%c-c' & \epsilon d-d'\\
%-d+d' & -c+c'
%\end{pmatrix}
%$$
%Thus, the determinant of the above matrix must be $0$, i.e. $-(c-c')^2
%+ \epsilon (d-d')^2 = 0$. Since $\epsilon$ is a quadratic non-residue, then in fact $c=c'$, $d=d'$. Thus, any such matrices are equivalent in $\GLp$, so such a matrix is unique.\end{proof}

\subsubsection{Case 4} We will use the following result of Etropolski, based on work of Serre and an observation of Mazur.

\begin{prop}[{\cite[Prop. 2.6]{Etropolski16}}]
Let $E_{/K_0}$ be an elliptic curve defined over a number field $K_0$ of degree $d_0$.  For a prime number $p$, let $\overline{G}$ be the projective
image of the mod $p$ Galois representation.
\begin{itemize}
\item[a)] If $\overline{G} \cong A_4$, then $p \leq 9d_0 + 1$.
\item[b)] If $\overline{G} \cong S_4$, then $p \leq 12d_0 + 1$.
\item[c)] If $\overline{G} \cong A_5$, then $p \leq 15d_0 + 1$.
\end{itemize}
\end{prop}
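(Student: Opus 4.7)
My plan is to follow Serre's template for analyzing exceptional projective images (\cite[\S2.6]{Serre72}), adapted to base fields of degree $d_0$. Let $G = \rho_p(\gg_{K_0}) \subseteq \GL_2(\F_p)$, let $\bar G$ denote its projective image, and write $\tilde Z := G \cap \F_p^{\times} \cdot I$ for the central subgroup that is the kernel of $G \twoheadrightarrow \bar G$. Throughout I will use that $\det \rho_p = \chi_p$ is the mod $p$ cyclotomic character, so $|\det G| \ge (p-1)/i(p) \ge (p-1)/d_0$, exactly as in the preceding cases of the proof.

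First I would pick a cyclic subgroup of maximal order $n$ in $\bar G$: namely $n = 3$ for $A_4$, $n = 4$ for $S_4$, and $n = 5$ for $A_5$. Lifting a generator to $\tilde s \in G$ gives $\tilde s^{\,n} \in \tilde Z$. Since small primes can be handled separately, I may assume $p > n$, so $\tilde s$ is semisimple with eigenvalues $\lambda,\mu$ either both in $\F_p$ or both in $\F_{p^2}\setminus \F_p$; in either case $\zeta := \lambda/\mu$ is a primitive $n$th root of unity.

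Next I would split into cases on the location of the eigenvalues. When $\lambda,\mu \in \F_p^{\times}$ one has $n \mid p-1$; when $\lambda,\mu \in \F_{p^2}\setminus \F_p$, the requirement that $\tilde s^{\,n} = \lambda^n I$ be a scalar in $\F_p^{\times}$ forces $n \mid p+1$. In each case I would combine the structural identity $\det(\tilde s)^{\,n} = (\lambda\mu)^n \in \det(\tilde Z) = \tilde Z^{\,2}$ with the size of $\det G$ and the datum $|\bar G^{\ab}| \in \{3,2,1\}$, in the manner attributed to Mazur: the quotient $\det G/\det \tilde Z$ has order dividing $|\bar G^{\ab}|$, while $\det\tilde s$ has $n$th power lying in $\det\tilde Z$, so its order in $\det G/\det\tilde Z$ divides $\gcd(n,|\bar G^{\ab}|)$. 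A careful analysis of the resulting Cartan-type decomposition of the (abelian) subgroup $\tilde Z\cdot\langle\tilde s\rangle$ produces a divisibility of the form $(p-1)/d_0 \le 3n$, and hence $p \le 3nd_0 + 1$, giving the desired $9d_0+1$, $12d_0+1$, and $15d_0+1$ respectively.

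The main obstacle I expect is pinning down exactly the numerical constants $9,12,15$. The naive abelianization bound alone yields only $|\det G| \le |\bar G^{\ab}|\cdot |\tilde Z|/2$, which is too crude because $|\tilde Z|$ can be as large as $p-1$. The sharpening uses both the lifted element $\tilde s$ (which controls $\det\tilde s$ modulo $\det\tilde Z$) and the bound $[\F_p^{\times}:\det G]\le d_0$, coupled with separate treatment of the split-Cartan and non-split-Cartan regimes for the abelian subgroup $\tilde Z\cdot\langle\tilde s\rangle \subseteq \GL_2(\F_p)$. Tracking these divisibilities carefully in each of the three exceptional types is exactly what produces the specific factors $9, 12, 15$.
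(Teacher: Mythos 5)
Note first that the paper does not prove this proposition at all; it is quoted verbatim from Etropolski's preprint with the attribution ``based on work of Serre and an observation of Mazur,'' so there is no internal proof to compare against. Your proposal correctly sets up standard machinery (the lift $\tilde s$ of a maximal-order element of $\overline{G}$, the eigenvalue ratio $\zeta$ of order $n$ forcing $n \mid p\pm 1$, the divisibility $[\det G:\det\tilde Z]\mid\lvert\overline{G}^{\ab}\rvert$), but the crucial final step --- extracting $(p-1)/d_0\leq 3n$ --- is never derived, and the ingredients you invoke cannot produce it. Concretely, nothing in your argument bounds $\lvert\det\tilde Z\rvert$: the inequality $\lvert\det G\rvert\leq\lvert\overline{G}^{\ab}\rvert\cdot\lvert\det\tilde Z\rvert$ is vacuous when $\tilde Z$ is large, and $\tilde Z$ genuinely can be the full scalar group. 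For any odd $p$, take $G$ to be the subgroup of $\GL_2(\F_p)$ generated by all scalar matrices together with a copy of $\widetilde{A_4}\subset\SL_2(\F_p)$ (the $\SL_2$-preimage of $A_4\subset\PSL_2(\F_p)$); then $\overline{G}\cong A_4$ while $\det G=(\F_p^\times)^2$ has index $2$, so the cyclotomic constraint $i(p)\mid d_0$ holds for every even $d_0$, for every $p$. Hence no bound $p\leq f(d_0)$ can follow from group theory together with $\det\rho_p=\chi_p$ alone.

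The missing ingredient is an arithmetic input specific to the mod $p$ Galois representation of an elliptic curve. In Serre's treatment --- and, implicitly, in what Etropolski's Prop.\ 2.6 carries over to degree $d_0$ --- the decisive fact is the restriction of $\rho_p$ to inertia at a prime $\pp\mid p$ of $K_0$: by the theory of tame fundamental characters, a generator $\gamma$ of tame inertia has eigenvalue ratio a power $\psi^{a-b}$ with $\lvert a-b\rvert$ controlled by $e(\pp/p)\leq d_0$ and by the degree of the tame extension over which $E$ attains stable reduction, whereas the image of $\gamma$ in $\overline{G}$ must be cyclic of order at most $n$ because $\overline{G}\in\{A_4,S_4,A_5\}$. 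Comparing the two forces $(p-1)/(c\,d_0)\leq n$ for a small absolute constant $c$, which is what produces bounds linear in $d_0$; your abelianization count does not. The numerology confirms the mismatch: $\lvert\overline{G}^{\ab}\rvert\cdot n$ equals $9,8,5$ for $A_4,S_4,A_5$, not the stated $9,12,15$, so the uniform factor $3$ cannot come from $\lvert\overline{G}^{\ab}\rvert$ and must be supplied by the inertial analysis you have omitted.
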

\noindent
So this case can occur only if $p \leq 15d_0 + 1$; by Remark \ref{REMARK3.1} we can omit these primes.

\subsubsection{Case 5} The subgroup $G$ is contained in a Borel subgroup iff $E$ admits an $F_0$-rational $p$-isogeny.
Our assumption $\operatorname{SI}(d_0)$ is precisely that the set of primes $p$ for which a non-CM elliptic curve defined over a
number field of degree $d_0$ is finite, so by Remark \ref{REMARK3.1} we can omit these primes.  This completes the proof of part a).

\subsubsection{The proof of part b)} The hypothesis $\operatorname{SI}(d_0)$ was only used in Case 5, so only Case 5 needs
to be redone under the hypotheses of part b): suppose that $F_0$ does not contain the Hilbert class field of any imaginary quadratic field.  (This hypothesis
precisely prohibits having an elliptic curve $E_{/F_0}$ for which the CM is $F_0$-rationally defined.)  Then Larson-Vaintrob
showed \cite{LV14} that, assuming (GRH), the set of primes $p$ such that an elliptic curve $E_{/F_0}$ admits an $F$-rational
$p$-isogeny is bounded, giving Case 5 under (GRH).  Finally, suppose moreover that $[F_0:\Q] = 2$.  Then the hypothesis on $F_0$ becomes that $F_0$
is not itself an imaginary quadratic field of class number $1$, and under this hypothesis Momose showed \cite{Momose95}
that the set of primes $p$ such that an elliptic curve $E_{/F_0}$ admits an $F$-rational $p$-isogeny is finite.  This completes
the proof of Case 5 and thus the proof of Theorem \ref{LASTTHM}b).

%So it completes the proofs of Theorems \ref{LASTTHM} and thus of
%Theorems \ref{FZEROTHM} and \ref{DZEROTHM}.

\end{document}